\newcommand{\bra}[1]{\left(#1\right)}
\newcommand{\sbra}[1]{\left[#1\right]}
\newcommand{\seqA}[1]{{\left<#1\right>}_{A}}
\newcommand{\vertiii}[1]{{\left\vert\kern-0.25ex\left\vert\kern-0.25ex\left\vert #1
    \right\vert\kern-0.25ex\right\vert\kern-0.25ex\right\vert}}
\newcommand{\normA}[1]{{\left\Vert#1\right\Vert}_{A}}
\newcommand{\abs}[1]{\left\vert#1\right\vert}
\newcommand{\set}[1]{\left\{#1\right\}}
\newcommand{\N}{\mathbb N}
\newcommand {\bh}{\mathcal{B}(\mathcal{H})}
\newcommand {\Sa}{S^{\sharp_{A}}}
\newcommand {\Za}{Z^{\sharp_{A}}}
\newcommand {\Ma}{M^{\sharp_{A}}}
\newcommand {\h}{\mathcal{H}}
\newcommand {\Ta}{T^{\sharp_{A}}}
\renewcommand {\b}{\mathcal{B}}
\newcommand {\tr}{\mathrm{tr}}
\newtheorem{theorem}{Theorem}[section]
\newtheorem{lemma}[theorem]{Lemma}
\newtheorem{corollary}[theorem]{Corollary}
\newtheorem{example}[theorem]{Example}
\newtheorem{remark}[theorem]{Remark}
\newcommand\mystyle{\everymath{\displaystyle}}
\title{Certain Upper Bounds on the $A$-Numerical Radius of Operators in Semi-Hilbertian Spaces and Their Applications}
\author{\href{https://orcid.org/0000-0002-3816-5287}{\includegraphics[scale=0.06]{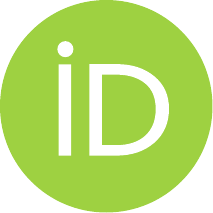}\hspace{1mm}M.H.M.~Rashid}\thanks{Corresponding Author} \\
	Department of Mathematics\&Statistics\\Faculty of Science P.O.Box(7)\\
	Mutah University University\\
	Mutah-Jordan \\
	\texttt{mrash@mutah.edu.jo}
}
\begin{document}
\maketitle

\begin{abstract}
	Consider a complex Hilbert space $\left(\mathcal{H}, \langle \cdot, \cdot \rangle\right)$ equipped with a positive bounded linear operator $A$ on $\mathcal{H}$. This induces a semi-norm $\|\cdot\|_A$ through the semi-inner product $\langle x, y \rangle_A = \langle Ax, y \rangle$ for $x, y \in \mathcal{H}$. In this semi-Hilbertian space setting, we investigate the $A$-numerical radius $w_A(T)$ and $A$-operator semi-norm $\|T\|_A$ of bounded operators $T$. This paper presents significant improvements to existing upper bounds for the $A$-numerical radius of operators in semi-Hilbertian spaces. We establish several new inequalities that provide sharper estimates than those currently available in the literature. Notably, we refine the triangle inequality for the $A$-operator semi-norm, offering more precise characterizations of operator behavior. To validate our theoretical advancements, we provide concrete examples that demonstrate the effectiveness of our results. These examples illustrate how our bounds offer tighter estimates compared to previous ones, confirming the practical relevance of our findings. Our work contributes to the broader understanding of numerical radius inequalities and their applications in operator theory, with potential implications for functional analysis and related fields.
\end{abstract}

\keywords{Positive operator\and Semi-inner product\and A-adjoint operator\and A-numerical radius\and Operator matrix\and  Inequality}

\section{Introduction}
Let $(\mathcal{H}, \langle \cdot, \cdot \rangle)$ be a nontrivial complex Hilbert space. We denote by $\mathcal{B}(\mathcal{H})$ the $C^*$-algebra of all bounded linear operators on $\mathcal{H}$, and by $I$ the identity operator on $\mathcal{H}$. For any linear subspace $M$ of $\mathcal{H}$, its closure in the norm topology is denoted by $\overline{M}$. Given an operator $T \in \mathcal{B}(\mathcal{H})$, we use the following notation: $\operatorname{ran}(T)$ for the range of $T$, $\ker(T)$ for the null space of $T$ and $T^*$ for the adjoint of $T$

An operator $A \in \mathcal{B}(\mathcal{H})$ is called \textit{positive} if $\langle Ax, x \rangle \geq 0$ for all $x \in \mathcal{H}$, denoted by $A \geq 0$. Every positive operator $A$ induces a positive semidefinite sesquilinear form $\langle \cdot, \cdot \rangle_A : \mathcal{H} \times \mathcal{H} \to \mathbb{C}$ defined by:
\[
\langle x, y \rangle_A = \langle Ax, y \rangle \quad \text{for all } x, y \in \mathcal{H}
\]
This form induces a semi-norm $\|\cdot\|_A$ given by:
\[
\|x\|_A = \sqrt{\langle x, x \rangle_A} = \|A^{1/2}x\|
\]
Note that $|x|_A = 0$ if and only if $x \in \ker(A)$. Consequently, $|\cdot|_A$ is a norm if and only if $A$ is injective, and $(\mathcal{H}, |\cdot|_A)$ is complete if and only if $\operatorname{ran}(A)$ is closed in $\mathcal{H}$.

An operator $S \in \mathcal{B}(\mathcal{H})$ is called an \textit{$A$-adjoint} of $T \in \mathcal{B}(\mathcal{H})$ if:
\[
\langle Tx, y \rangle_A = \langle x, Sy \rangle_A \quad \text{for all } x, y \in \mathcal{H}
\]
Equivalently, the existence of an $A$-adjoint for $T$ corresponds to the solvability of the operator equation $AX = T^*A$. By Douglas' theorem \cite{Douglas}, this equation has a bounded solution if and only if $\operatorname{ran}(T^*A) \subseteq \operatorname{ran}(A)$.

We define the set of all operators admitting $A$-adjoints as:
\[
\mathcal{B}_A(\mathcal{H}) = \{T \in \mathcal{B}(\mathcal{H}) : \operatorname{ran}(T^*A) \subseteq \operatorname{ran}(A)\}
\]
For $T \in \mathcal{B}_A(\mathcal{H})$, the unique solution to $AX = T^A$ is denoted by $T^{\sharp_A}$. Important properties include: $T^{\sharp_A} = A^{\dag}T^A$, where $A^{\dag}$ is the Moore-Penrose inverse of $A$; $\operatorname{ran}(T^{\sharp_A}) \subseteq \operatorname{ran}(A)$; $\ker(T^{\sharp_A}) \subseteq \ker(T^*A)$; $T^{\sharp_A} \in \mathcal{B}_A(\mathcal{H})$; and $(T^{\sharp_A})^{\sharp_A} = P_ATP_A$, where $P_A$ is the orthogonal projection onto $\overline{\operatorname{ran}(A)}$.

Similarly, the set of operators admitting $A^{1/2}$-adjoints is:
\[
\mathcal{B}_{A^{1/2}}(\mathcal{H}) = \{T \in \mathcal{B}(\mathcal{H}) : \|Tx\|_A \leq \lambda\|x\|_A \text{ for all } x \in \mathcal{H}\}
\]
The semi-inner product $\langle \cdot, \cdot \rangle_A$ induces a semi-norm on $\mathcal{B}_{A^{1/2}}(\mathcal{H})$:
\begin{eqnarray*}
  \|T\|_A &=&\sup_{\substack{x \in \overline{\operatorname{ran}(A)} \\ x \neq 0}} \frac{\|Tx\|_A}{\|x\|_A}=
   \sup\{\|Tx\|_A : x \in \mathcal{H}, \|x\|_A = 1\} \\
   &=&\sup\{|\langle Tx, y \rangle_A| : x, y \in \mathcal{H}, \|x\|_A = \|y\|_A = 1\} < \infty
\end{eqnarray*}

Note that $\mathcal{B}_A(\mathcal{H})$ and $\mathcal{B}_{A^{1/2}}(\mathcal{H})$ are two subalgebras of $\mathcal{B}(\mathcal{H})$ and satisfy $\mathcal{B}_A(\mathcal{H}) \subseteq \mathcal{B}_{A^{1/2}}(\mathcal{H})$, see \cite{ACG1, ACG2}. And by the definition of $\|T\|_A$, we can obtain
$$\|T\|_A = \|T^{\sharp_A}\|_A, \quad \|T^{\sharp_A} T\|_A = \|T T^{\sharp_A}\|_A = \|T^{\sharp_A}\|_A^2 = \|T\|_A^2.$$
Moreover, if $T, S \in \mathcal{B}_A(\mathcal{H})$, then $(TS)^{\sharp_A} = S^{\sharp_A} T^{\sharp_A}$, $\|T x\|_A \leq \|T\|_A \|x\|_A$ for any $x \in \mathcal{H}$, and $\|T S\|_A \leq \|T\|_A \|S\|_A$. It also has $\|T + S\|_A \leq \|T\|_A + \|S\|_A$. In particular, an operator $T \in \mathcal{B}(\mathcal{H})$ is $A$-selfadjoint if $AT$ is selfadjoint, which ensures that $\|T\|_A = \sup\{|\langle T x, x \rangle_A| : x \in \mathcal{H}, \|x\|_A = 1\}$, see \cite{Feki}. An operator $T \in \mathcal{B}(\mathcal{H})$ is $A$-positive if $AT$ is positive. Obviously, an $A$-positive operator is always an $A$-selfadjoint operator. And it should be noted that $T^{\sharp_A} T$ and $T T^{\sharp_A}$ are both $A$-positive.

For $A$-bounded operator, $A$-numerical range is defined in \cite{HFA} as
$$W_A(T) := \{|\langle T x, x \rangle_A| : x \in \mathcal{H}, \|x\|_A = 1\}.$$
Also, the $A$-numerical radius is defined by
$$w_A(T) := \sup\{|\lambda| : \lambda \in W_A(T)\}.$$
If $A = I$, $A$-numerical range and $A$-numerical radius will be the classical numerical range and the classical numerical radius. Notice that it may happen that $w_A(T) = \infty$ for some $T \in \mathcal{B}(\mathcal{H})$, see \cite{BN}.

It is well known that if $T\in\b_A(\h)$, then $w_A(T)$ is a semi-norm which is equivalent to
$\normA{T}$, and it holds
\begin{equation}\label{IN-1}
  \frac{1}{2}\normA{T}\leq w_A(T)\leq \normA{T}.
\end{equation}
For $w_A(T)$, an important inequality is the power inequality (see \cite[Proposition 3.10]{MXZ}), which asserts that
\begin{equation}\label{IN-2}
  w_A(T^n)\leq w_A^n(T),\,\,\mbox{for $n\in\N$}.
\end{equation}

Recently, the right-side of the inequality (\ref{IN-1}) has been refined by many researchers. For example,
it has been shown by Zamani in \cite{Zamani-2}, if $T\in\b_A(\h)$, then
\begin{equation}\label{IN-3}
  w_A^2(T)\leq \frac{1}{2}\normA{\Ta T+T\Ta}.
\end{equation}
In \cite{Saddi}, Saddi showed that if $T\in\b_A(\h)$, then
\begin{equation}\label{IN-4}
  w_A^2(T)\leq \frac{1}{2}\bra{\normA{T}^2+w_A(T^2)}.
\end{equation}
Very recently, several refinements of the inequalities in (\ref{IN-1}) have been proved by many
researchers. For example, it was shown by the authors in \cite{Feki-2, Zamani-2} that for $T\in\b_A(\h)$,
then
\begin{equation}\label{IN-5}
  \frac{1}{4}\normA{\Ta T+T\Ta}\leq w_A^2(T)\leq \frac{1}{2}\normA{\Ta T+T\Ta}.
\end{equation}
In \cite{QHB} a refinement of the second inequality in (\ref{IN-5}) was given, which
asserts
\begin{equation}\label{IN-6}
  w_A^4(T)\leq \frac{3}{16}\normA{\Ta T+T\Ta}^2+\frac{1}{8}\normA{\Ta T+T\Ta}w_A(T^2).
\end{equation}
Moreover, a refinement of the second inequality in (\ref{IN-5}) was also proved in \cite{QHC}, which
asserts that if $T\in\b_A(\h)$ and $\beta\geq 0$, then
\begin{equation}\label{IN-7}
  w_A^4(T)\leq \frac{1+2\beta}{16(1+\beta)}\normA{\Ta T+T\Ta}^2+\frac{2\beta+3}{8(1+\beta)}\normA{\Ta T+T\Ta}w_A(T^2)
\end{equation}
and
\begin{equation}\label{IN-8}
  w_A^4(T)\leq \frac{1+2\beta}{8(1+\beta)}\normA{\Ta T+T\Ta}^2+\frac{1}{2(1+\beta)}w_A^2(T^2).
\end{equation}

The $A$-numerical radius and the numerical radius of Hilbert space operators have been extensively studied in recent decades due to their usefulness in analyzing and understanding operator behavior in applications such as numerical analysis, physics, information theory, differential equations, and stability theory of difference approximations. For problems involving hyperbolic initial values, we refer the reader to \cite{Feki, Feki-1, KZ, Saddi} and the references therein. Moreover, the numerical radius is often employed as a more reliable indicator of the convergence rate of iterative algorithms than the spectral radius \cite{BK, Hirz, Kit-S1, Kit-S2}.

For a comprehensive overview of recent results concerning $w_A(\cdot)$, we direct the reader to \cite{FagGor, BF, BFP, Feki, Feki-1, Feki-2, HFA, MXZ, QHB, QHC, Zamani-1, Zamani-2} and the references therein.

The main objective of this work is to improve existing bounds for the $A$-numerical radius of operators in semi-Hilbertian spaces. To gain deeper insight into operator behavior in this setting, we establish novel and sharper inequalities. In particular, we refine the triangle inequality for the $A$-operator seminorm to obtain a more precise characterization of operator norms.

These improvements contribute significantly to the understanding of numerical radius inequalities and their applications. We demonstrate the effectiveness of our proposed bounds through illustrative examples, which also serve to validate our theoretical results. Our findings offer new perspectives and potential applications in operator theory and functional analysis.
\section{Preliminaries and Complementary Results}
To prove the results of this paper, we need the following lemmas. The first lemma
is established in \cite{XYZ}.
\begin{lemma}\label{KR0} Let $x,y,z\in\h$ with $\normA{z}=1$. Then
\begin{equation*}
  \abs{\seqA{x,z}\seqA{z,y}}\leq \frac{1}{2}\bra{\normA{x}\normA{y}+\abs{\seqA{x,y}}}.
\end{equation*}
\end{lemma}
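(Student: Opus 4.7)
The plan is to adapt the classical Buzano inequality (which sharpens Cauchy--Schwarz) to the $A$-semi-inner product setting. Because $\seqA{\cdot,\cdot}$ is a positive semidefinite sesquilinear form, the $A$-Cauchy--Schwarz inequality $\abs{\seqA{u,v}}\leq \normA{u}\normA{v}$ is available, and together with the normalization $\normA{z}=1$ this is essentially all that is needed.

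First I would introduce the auxiliary vector $w:=x-2\seqA{x,z}\,z$ and verify by sesquilinearity the polarization-type identity
\begin{equation*}
\seqA{w,y}=\seqA{x,y}-2\seqA{x,z}\seqA{z,y},
\end{equation*}
which rearranges as $2\seqA{x,z}\seqA{z,y}=\seqA{x,y}-\seqA{w,y}$. Applying the triangle inequality for the modulus then gives
\begin{equation*}
2\abs{\seqA{x,z}\seqA{z,y}}\leq \abs{\seqA{x,y}}+\abs{\seqA{w,y}}.
\end{equation*}

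Next I would bound $\abs{\seqA{w,y}}\leq \normA{w}\normA{y}$ by $A$-Cauchy--Schwarz, and compute $\normA{w}$ by a direct expansion: writing $\alpha:=\seqA{x,z}$, sesquilinearity yields
\begin{equation*}
\normA{w}^2=\normA{x}^2-2\overline{\alpha}\seqA{x,z}-2\alpha\seqA{z,x}+4\abs{\alpha}^2\normA{z}^2.
\end{equation*}
Using $\seqA{z,x}=\overline{\seqA{x,z}}$ together with $\normA{z}^2=1$, the cross terms cancel and one obtains $\normA{w}=\normA{x}$. Substituting back and dividing by $2$ delivers the claim.

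The argument is essentially routine. The only subtlety is that $\normA{\cdot}$ is only a semi-norm when $\n(A)\neq\{0\}$, so $w$ cannot be interpreted as a genuine ``orthogonal residue'' in an honest inner product space; however, each step above uses only sesquilinearity of $\seqA{\cdot,\cdot}$ and its associated Cauchy--Schwarz inequality, neither of which requires positive-definiteness, so the possible degeneracy of $A$ poses no real obstacle.
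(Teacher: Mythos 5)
Your proof is correct: the reflection trick $w=x-2\seqA{x,z}z$ with the computation $\normA{w}=\normA{x}$ (valid since only sesquilinearity, Hermitian symmetry of $\seqA{\cdot,\cdot}$, and the semi-inner-product Cauchy--Schwarz inequality are used) is the standard Buzano argument transplanted to the $A$-setting, and every step goes through despite the possible degeneracy of $A$. The paper itself does not prove this lemma but merely cites it from the literature, so there is no in-paper argument to compare against; your write-up is a legitimate self-contained proof of the cited result.
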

\begin{lemma}\label{Lemma3.18} For every $a,b,e\in\h$ with $\normA{e}=1$, we have
\begin{equation}\label{MD1}
  \abs{\seqA{a,e}\seqA{e,b}}\leq \bra{\frac{1+\alpha}{2}}\normA{a}\normA{b}+\bra{\frac{1-\alpha}{2}}\abs{\seqA{a,b}}
\end{equation}
for every $\alpha\in [0,1]$.
\end{lemma}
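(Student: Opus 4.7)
The plan is to recognize the inequality as a convex combination of two already-available estimates, so essentially no new work is required beyond interpolation.

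First I would identify the two endpoints. At $\alpha = 0$ the claimed inequality reduces exactly to Lemma \ref{KR0}, namely
$$\abs{\seqA{a,e}\seqA{e,b}} \leq \tfrac{1}{2}\bra{\normA{a}\normA{b} + \abs{\seqA{a,b}}}.$$
At $\alpha = 1$ the claimed inequality reduces to $\abs{\seqA{a,e}\seqA{e,b}} \leq \normA{a}\normA{b}$, which is an immediate consequence of applying the semi-Hilbertian Cauchy--Schwarz inequality to each factor and using $\normA{e}=1$:
$$\abs{\seqA{a,e}\seqA{e,b}} = \abs{\seqA{a,e}}\cdot\abs{\seqA{e,b}} \leq \bra{\normA{a}\normA{e}}\bra{\normA{e}\normA{b}} = \normA{a}\normA{b}.$$

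Next I would take the convex combination: multiply the Cauchy--Schwarz estimate by $\alpha$, multiply Lemma \ref{KR0} by $(1-\alpha)$, and add. Since both right-hand sides dominate the same quantity $\abs{\seqA{a,e}\seqA{e,b}}$ and $\alpha,(1-\alpha)\geq 0$ with $\alpha+(1-\alpha)=1$, the sum gives
$$\abs{\seqA{a,e}\seqA{e,b}} \leq \alpha\,\normA{a}\normA{b} + \tfrac{1-\alpha}{2}\bra{\normA{a}\normA{b} + \abs{\seqA{a,b}}}.$$
Collecting the $\normA{a}\normA{b}$ terms gives the coefficient $\alpha + \tfrac{1-\alpha}{2} = \tfrac{1+\alpha}{2}$, while the coefficient of $\abs{\seqA{a,b}}$ is $\tfrac{1-\alpha}{2}$, which is precisely \eqref{MD1}.

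There is no genuine obstacle here; the only thing worth being careful about is that we need $\alpha \in [0,1]$ (not a wider range) so that the combination is truly convex and both weights remain non-negative. Outside $[0,1]$ one of the two weights becomes negative and the argument collapses, so the stated range of $\alpha$ is essentially tight for this interpolation method.
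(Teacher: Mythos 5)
Your proof is correct and follows essentially the same route as the paper: split $\abs{\seqA{a,e}\seqA{e,b}}$ as the convex combination $\alpha(\cdot)+(1-\alpha)(\cdot)$, bound the first copy by $\normA{a}\normA{b}$ via Cauchy--Schwarz and the second by Lemma \ref{KR0}, then collect coefficients. The only cosmetic difference is that you justify $\abs{\seqA{a,e}\seqA{e,b}}\leq\normA{a}\normA{b}$ by applying Cauchy--Schwarz to each factor separately, whereas the paper extracts it from a refined Cauchy--Schwarz chain; both are valid.
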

\begin{proof} The following is a refinement of the Cauchy-Schwarz inequality
$$\abs{\seqA{a,b}}\leq \abs{\seqA{a,b}-\seqA{a,e}\seqA{e,b}}+\abs{\seqA{a,e}\seqA{e,b}}\leq \normA{a}\normA{b}$$
for every $a,b,e\in\h$ with $\normA{e}=1$.

From inequality (\ref{MD1}), we conclude that
\begin{eqnarray*}
  \abs{\seqA{a,e}\seqA{e,b}} &=& \alpha\abs{\seqA{a,e}\seqA{e,b}}+(1-\alpha)\abs{\seqA{a,e}\seqA{e,b}} \\
   &\leq& \alpha\normA{a}\normA{b}+\frac{1-\alpha}{2}\bra{\normA{a}\normA{b}+\abs{\seqA{a,b}}} \,\,\bra{\mbox{by Lemma \ref{KR0}}} \\
   &=&\bra{\frac{1+\alpha}{2}}\normA{a}\normA{b}+\bra{\frac{1-\alpha}{2}}\abs{\seqA{a,b}}.
\end{eqnarray*}
\end{proof}
Depending on Lemma \ref{Lemma3.18} and the   convexity of the function
$f(t)=t^r,r\geq 1$, we have
\begin{lemma} \label{Lemma3.19}For every $a,b,e\in\h$ with $\normA{e}=1$, we have
\begin{equation}\label{MD2}
  \abs{\seqA{a,e}\seqA{e,b}}^{r}\leq \bra{\frac{1+\alpha}{2}}\normA{a}^{r}\normA{b}^{r}+\bra{\frac{1-\alpha}{2}}\abs{\seqA{a,b}}^{r}
\end{equation}
for every $\alpha\in [0,1]$ and $r\geq 1$.
\end{lemma}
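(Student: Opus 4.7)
The plan is to reduce the inequality to a direct consequence of Lemma \ref{Lemma3.18} combined with Jensen's inequality for the convex function $f(t)=t^r$ on $[0,\infty)$.

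First I would apply Lemma \ref{Lemma3.18} to obtain
\begin{equation*}
\abs{\seqA{a,e}\seqA{e,b}}\leq \bra{\frac{1+\alpha}{2}}\normA{a}\normA{b}+\bra{\frac{1-\alpha}{2}}\abs{\seqA{a,b}}.
\end{equation*}
Since $r\geq 1$, the function $t\mapsto t^r$ is monotone increasing on $[0,\infty)$, so raising both sides to the $r$-th power preserves the inequality:
\begin{equation*}
\abs{\seqA{a,e}\seqA{e,b}}^{r}\leq \left[\bra{\frac{1+\alpha}{2}}\normA{a}\normA{b}+\bra{\frac{1-\alpha}{2}}\abs{\seqA{a,b}}\right]^{r}.
\end{equation*}

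Next I would observe that for $\alpha\in[0,1]$ the coefficients $\frac{1+\alpha}{2}$ and $\frac{1-\alpha}{2}$ are nonnegative and sum to $1$, so the bracketed expression is a convex combination of the nonnegative real numbers $\normA{a}\normA{b}$ and $\abs{\seqA{a,b}}$. Since $f(t)=t^r$ is convex on $[0,\infty)$ for $r\geq 1$, Jensen's inequality yields
\begin{equation*}
\left[\bra{\frac{1+\alpha}{2}}\normA{a}\normA{b}+\bra{\frac{1-\alpha}{2}}\abs{\seqA{a,b}}\right]^{r}\leq \bra{\frac{1+\alpha}{2}}\normA{a}^{r}\normA{b}^{r}+\bra{\frac{1-\alpha}{2}}\abs{\seqA{a,b}}^{r}.
\end{equation*}
Chaining these two inequalities gives the desired bound \eqref{MD2}.

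There is no serious obstacle here: the argument is a two-line combination of the previous lemma with convexity. The only subtlety worth mentioning is verifying that the coefficients $\frac{1\pm\alpha}{2}$ indeed form a probability vector for every $\alpha\in[0,1]$, which justifies the application of Jensen's inequality; this is immediate. The monotonicity of $t\mapsto t^r$ on $[0,\infty)$ is needed to transfer the inequality from Lemma \ref{Lemma3.18} through the power.
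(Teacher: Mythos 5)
Your proof is correct and follows exactly the route the paper intends: the paper states the lemma as an immediate consequence of Lemma \ref{Lemma3.18} and the convexity of $f(t)=t^{r}$ for $r\geq 1$, without writing out the details you supply. Your explicit two-step argument (monotonicity of $t\mapsto t^{r}$ to raise the inequality to the $r$-th power, then Jensen's inequality applied to the convex combination with weights $\frac{1+\alpha}{2}$ and $\frac{1-\alpha}{2}$) is precisely the omitted justification.
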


\begin{lemma}\label{Lemma2.4}
For any vectors $a,b,e \in \h$ with $\normA{e} = 1$, the following inequality holds:
\begin{eqnarray}\label{eq:main_inequality}
  \abs{\seqA{a,e}\seqA{e,b}}^2 &\leq&\frac{1}{4}\left[\frac{(2\beta+1)(1+\alpha^2)+2\alpha}{1+\beta}\normA{a}^2\normA{b}^2\right.\nonumber \\
  &+&\left.\frac{(1-\alpha)[2+(1+\alpha)(1+2\beta)]}{1+\beta}\normA{a}\normA{b}\abs{\seqA{a,b}}\right]
\end{eqnarray}
for all $\alpha \in [0,1]$ and $\beta \geq 0$.
\end{lemma}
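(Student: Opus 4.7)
The plan is to derive the claimed estimate as a convex combination, with weights $\frac{1}{1+\beta}$ and $\frac{\beta}{1+\beta}$, of two intermediate upper bounds on $\abs{\seqA{a,e}\seqA{e,b}}^{2}$. Throughout, set $P=\normA{a}\normA{b}$ and $Q=\abs{\seqA{a,b}}$. The Cauchy--Schwarz inequality in the semi-inner product gives $Q\leq P$, hence $Q^{2}\leq PQ$, and this will be used to replace every $Q^{2}$-term by a $PQ$-term, which is the form in which the statement is cast.

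For the first intermediate bound, I would square (\ref{MD1}) directly to obtain
\[
\abs{\seqA{a,e}\seqA{e,b}}^{2}\leq \frac{(1+\alpha)^{2}}{4}P^{2}+\frac{1-\alpha^{2}}{2}PQ+\frac{(1-\alpha)^{2}}{4}Q^{2},
\]
and then use $Q^{2}\leq PQ$ and regroup to get
\[
\abs{\seqA{a,e}\seqA{e,b}}^{2}\leq \frac{(1+\alpha)^{2}}{4}P^{2}+\frac{(1-\alpha)(3+\alpha)}{4}PQ,
\]
which is exactly the $\beta=0$ instance of the target inequality. For the second intermediate bound, I would invoke Lemma~\ref{Lemma3.19} with $r=2$ but with the parameter $\alpha^{2}$ in place of $\alpha$ (permissible since $\alpha\in[0,1]$ forces $\alpha^{2}\in[0,1]$), yielding $\abs{\seqA{a,e}\seqA{e,b}}^{2}\leq \frac{1+\alpha^{2}}{2}P^{2}+\frac{1-\alpha^{2}}{2}Q^{2}$, and then once more replace $Q^{2}$ by $PQ$ to land at
\[
\abs{\seqA{a,e}\seqA{e,b}}^{2}\leq \frac{1+\alpha^{2}}{2}P^{2}+\frac{1-\alpha^{2}}{2}PQ.
\]

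Finally, I would form $\frac{1}{1+\beta}\,(\text{first bound})+\frac{\beta}{1+\beta}\,(\text{second bound})$, which is legitimate for $\beta\geq 0$ since both weights lie in $[0,1]$ and sum to $1$. The coefficient of $P^{2}$ becomes $\frac{(1+\alpha)^{2}+2\beta(1+\alpha^{2})}{4(1+\beta)}$, which, via the identity $(1+\alpha)^{2}=(1+\alpha^{2})+2\alpha$, rewrites as $\frac{(2\beta+1)(1+\alpha^{2})+2\alpha}{4(1+\beta)}$ and so matches the statement. The coefficient of $PQ$ is
\[
\frac{(1-\alpha)(3+\alpha)+2\beta(1-\alpha^{2})}{4(1+\beta)}=\frac{(1-\alpha)\bigl[(3+\alpha)+2\beta(1+\alpha)\bigr]}{4(1+\beta)},
\]
and the elementary check $(3+\alpha)+2\beta(1+\alpha)=2+(1+\alpha)(1+2\beta)$ converts this to the form in the statement.

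The main (and essentially only) difficulty is organizational rather than technical: one must notice that Lemma~\ref{Lemma3.19} should be used with parameter $\alpha^{2}$, not $\alpha$, because this is precisely what generates the factors $1+\alpha^{2}$ and $1-\alpha^{2}$ that surface in the target; after that the proof is just the addition of two already-established estimates, followed by the two short algebraic rearrangements indicated above.
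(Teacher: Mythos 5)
Your proof is correct, and it reaches the stated coefficients by a slightly different decomposition than the paper's. The paper squares Lemma~\ref{Lemma3.18} to get $\abs{\seqA{a,e}\seqA{e,b}}^2\leq \frac{(1+\alpha)^2}{4}P^2+\frac{1-\alpha^2}{2}PQ+\frac{(1-\alpha)^2}{4}Q^2$ (with $P=\normA{a}\normA{b}$, $Q=\abs{\seqA{a,b}}$) and then substitutes the refined Cauchy--Schwarz bound $Q^2\leq \frac{\beta}{1+\beta}P^2+\frac{1}{1+\beta}PQ$ into the residual $Q^2$ term only; you instead build two complete endpoint bounds --- the $\beta=0$ instance (squared Lemma~\ref{Lemma3.18} plus $Q^2\leq PQ$) and the $\beta\to\infty$ instance (Lemma~\ref{Lemma3.19} with $r=2$ and parameter $\alpha^2$, plus $Q^2\leq PQ$) --- and take their convex combination with weights $\frac{1}{1+\beta}$ and $\frac{\beta}{1+\beta}$. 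The two bookkeepings produce identical numerators (I checked both the $P^2$ and $PQ$ coefficients), so the results agree exactly. What your route buys is a cleaner structural reading of the lemma as an interpolation between its two extreme cases in $\beta$, at the price of the slightly non-obvious substitution $\alpha\mapsto\alpha^2$ in Lemma~\ref{Lemma3.19}; the paper's route avoids that substitution but leaves the roles of $\alpha$ and $\beta$ more entangled in the intermediate expressions. Both arguments rest on the same ingredients (Lemma~\ref{Lemma3.18}, convexity of $t\mapsto t^2$, and $Q\leq P$), so neither is more general than the other.
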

\begin{proof}
We establish the inequality through the following steps:

1. \textbf{Initial bounds:}
\begin{itemize}
\item From the Cauchy-Schwarz inequality:
\begin{equation}\label{eq:cauchy_schwarz}
\abs{\seqA{a,b}}^2 \leq \normA{a}^2\normA{b}^2
\end{equation}

\item From Lemma \ref{Lemma3.18}:
\begin{eqnarray}\label{eq:lemma318}
  \abs{\seqA{a,e}\seqA{e,b}}^2 &\leq&\left(\frac{1+\alpha}{2}\right)^2\normA{a}^2\normA{b}^2 + \frac{1-\alpha^2}{2}\normA{a}\normA{b}\abs{\seqA{a,b}} \nonumber \\
  &+&\left(\frac{1-\alpha}{2}\right)^2\abs{\seqA{a,b}}^2
\end{eqnarray}
\item The refined inequality:
\begin{equation}\label{eq:refined}
\abs{\seqA{a,b}}^2 \leq \frac{\beta}{1+\beta}\normA{a}^2\normA{b}^2 + \frac{1}{1+\beta}\normA{a}\normA{b}\abs{\seqA{a,b}}
\end{equation}
\end{itemize}

2. \textbf{Combining inequalities:}
\begin{align*}
\abs{\seqA{a,e}\seqA{e,b}}^2
&\leq \left(\frac{1+\alpha}{2}\right)^2\normA{a}^2\normA{b}^2 + \frac{1-\alpha^2}{2}\normA{a}\normA{b}\abs{\seqA{a,b}} \\
&\quad + \left(\frac{1-\alpha}{2}\right)^2\left[\frac{\beta}{1+\beta}\normA{a}^2\normA{b}^2 + \frac{1}{1+\beta}\normA{a}\normA{b}\abs{\seqA{a,b}}\right] \\
&= \left[\frac{(1+\alpha)^2}{4} + \frac{(1-\alpha)^2\beta}{4(1+\beta)}\right]\normA{a}^2\normA{b}^2 \\
&\quad + \left[\frac{1-\alpha^2}{2} + \frac{(1-\alpha)^2}{4(1+\beta)}\right]\normA{a}\normA{b}\abs{\seqA{a,b}}
\end{align*}

3. \textbf{Simplifying coefficients:}
\begin{align*}
C_1 &:= \frac{(1+\alpha)^2}{4} + \frac{(1-\alpha)^2\beta}{4(1+\beta)} = \frac{(2\beta+1)(1+\alpha^2)+2\alpha}{4(1+\beta)} \\
C_2 &:= \frac{1-\alpha^2}{2} + \frac{(1-\alpha)^2}{4(1+\beta)} = \frac{(1-\alpha)[2+(1+\alpha)(1+2\beta)]}{4(1+\beta)}
\end{align*}

4. \textbf{Final form:}
Substituting $C_1$ and $C_2$ yields:
\begin{eqnarray*}
  \abs{\seqA{a,e}\seqA{e,b}}^2 &\leq&\leq \frac{1}{4}\left[\frac{(2\beta+1)(1+\alpha^2)+2\alpha}{1+\beta}\normA{a}^2\normA{b}^2\right. \\
   &+&\left.\frac{(1-\alpha)[2+(1+\alpha)(1+2\beta)]}{1+\beta}\normA{a}\normA{b}\abs{\seqA{a,b}}\right]
\end{eqnarray*}
\end{proof}
\begin{remark}
When $\alpha = 0$ and $\beta = 0$, the inequality simplifies to:
\begin{equation*}
\abs{\seqA{a,e}\seqA{e,b}}^2 \leq \frac{1}{4}\left[\normA{a}^2\normA{b}^2 + 2\normA{a}\normA{b}\abs{\seqA{a,b}}\right]
\end{equation*}
This special case agrees with established results in the literature.
\end{remark}
Depending on the convexity of the function $f(t)=t^r$ $(r\geq 1)$ and Lemma \ref{Lemma2.4}, we have
\begin{corollary}\label{COLLORY-ONE} For every $a,b,e\in\h$ with $\normA{e}=1$, we have
\begin{eqnarray}
  \abs{\seqA{a,e}\seqA{e,b}}^{2r}&\leq& \frac{1}{4}\left[\bra{\frac{(2\beta+1)(1+\alpha^2)+2\alpha}{1+\beta}}\normA{a}^{2r}\normA{b}^{2r}\right.\nonumber \\
  &+&\left.\bra{\frac{\bra{1-\alpha}\sbra{2+(1+\alpha)(1+2\beta)}}{1+\beta}}\normA{a}^{r}\normA{b}^{r}\abs{\seqA{a,b}}^{r}\right]
\end{eqnarray}
for every $\alpha\in [0,1]$, $r\geq 1$ and $\beta\geq 0$.

\end{corollary}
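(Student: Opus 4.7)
The plan is to combine Lemma \ref{Lemma2.4} with the convexity of the power function $f(t)=t^{r}$ for $r\geq 1$. Abbreviating $L:=\abs{\seqA{a,e}\seqA{e,b}}^{2}$, $X:=\normA{a}^{2}\normA{b}^{2}$, and $Y:=\normA{a}\normA{b}\abs{\seqA{a,b}}$, Lemma \ref{Lemma2.4} supplies the base inequality $L\leq \frac{C_{1}}{4}X+\frac{C_{2}}{4}Y$, where $C_{1}:=\frac{(2\beta+1)(1+\alpha^{2})+2\alpha}{1+\beta}$ and $C_{2}:=\frac{(1-\alpha)[2+(1+\alpha)(1+2\beta)]}{1+\beta}$.

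The pivotal observation, on which the entire corollary hinges, is that $C_{1}+C_{2}=4$ for every admissible pair $(\alpha,\beta)$. Expanding the numerator $(2\beta+1)(1+\alpha^{2})+2\alpha+(1-\alpha)[2+(1+\alpha)(1+2\beta)]$, the $\alpha$-linear terms $2\alpha$ and $-2\alpha$ cancel, and the remaining $\alpha^{2}$-contributions combine through $(1+\alpha^{2})+(1-\alpha^{2})=2$ to yield $(1+2\beta)\cdot 2+2=4(1+\beta)$. After dividing by the common denominator $1+\beta$, we obtain $C_{1}+C_{2}=4$, so $\lambda_{1}:=C_{1}/4$ and $\lambda_{2}:=C_{2}/4$ are non-negative weights with $\lambda_{1}+\lambda_{2}=1$.

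With this in hand, the corollary follows in one line. Raising Lemma \ref{Lemma2.4}'s estimate to the $r$-th power and invoking convexity of $t\mapsto t^{r}$ on $[0,\infty)$, we obtain $L^{r}\leq (\lambda_{1}X+\lambda_{2}Y)^{r}\leq \lambda_{1}X^{r}+\lambda_{2}Y^{r}$. Since $L^{r}=\abs{\seqA{a,e}\seqA{e,b}}^{2r}$, $X^{r}=\normA{a}^{2r}\normA{b}^{2r}$, and $Y^{r}=\normA{a}^{r}\normA{b}^{r}\abs{\seqA{a,b}}^{r}$, substituting the expressions for $\lambda_{1}$ and $\lambda_{2}$ back recovers precisely the stated upper bound.

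The only real obstacle is the algebraic verification that the two coefficients multiplying $X$ and $Y$ in Lemma \ref{Lemma2.4} form a genuine convex combination; without $C_{1}+C_{2}=4$ it would be illegitimate to invoke the Jensen-type inequality $(\lambda_{1}u+\lambda_{2}v)^{r}\leq \lambda_{1}u^{r}+\lambda_{2}v^{r}$ directly. Once that identity is settled, no additional semi-Hilbertian machinery enters, and the proof reduces to a single application of the convexity of $t^{r}$ for $r\geq 1$.
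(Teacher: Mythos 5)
Your proof is correct and follows exactly the route the paper intends (the paper gives no explicit proof, merely citing Lemma \ref{Lemma2.4} and the convexity of $t\mapsto t^{r}$); your verification that $C_{1}+C_{2}=4$, so that the coefficients form a genuine convex combination, is the one detail the paper leaves implicit, and you have checked it correctly.
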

\begin{lemma}\label{Lemma2.5} For every $a,b,e\in\h$ with $\normA{e}=1$, we have
\begin{equation}\label{Major-1}
  \abs{\seqA{a,e}\seqA{e,b}}^{2r}\leq \bra{\frac{1+\alpha+2\beta}{2(1+\beta)}}\normA{a}^{2r}\normA{b}^{2r}+\bra{\frac{1-\alpha}{2(1+\beta)}}\abs{\seqA{a,b}}^{2r}
\end{equation}
for every $\alpha\in [0,1]$, $r\geq 1$  and $\beta\geq 0$.
\end{lemma}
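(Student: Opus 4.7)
The strategy is to bootstrap Lemma \ref{Lemma3.19} (the powered-up version of Lemma \ref{Lemma3.18}) by one round of $\beta$-splitting via the Cauchy--Schwarz inequality. Since no cross term $\normA{a}^r\normA{b}^r\abs{\seqA{a,b}}^r$ appears in the target, I expect only Lemma \ref{Lemma3.19} to be needed, not Lemma \ref{Lemma2.4}.

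First, I would apply Lemma \ref{Lemma3.19} with $r$ replaced by $2r$ (legal because $2r\geq 2\geq 1$) to obtain
\begin{equation*}
\abs{\seqA{a,e}\seqA{e,b}}^{2r}\leq \frac{1+\alpha}{2}\normA{a}^{2r}\normA{b}^{2r}+\frac{1-\alpha}{2}\abs{\seqA{a,b}}^{2r}.
\end{equation*}
This already handles the $\beta=0$ case and isolates where $\beta$ must enter: only through the $\abs{\seqA{a,b}}^{2r}$ term.

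Next, I would introduce $\beta\geq 0$ by a trivial convex split. Writing $\abs{\seqA{a,b}}^{2r}=\tfrac{\beta}{1+\beta}\abs{\seqA{a,b}}^{2r}+\tfrac{1}{1+\beta}\abs{\seqA{a,b}}^{2r}$ and bounding the first summand using the Cauchy--Schwarz inequality $\abs{\seqA{a,b}}\leq\normA{a}\normA{b}$ (raised to the $2r$ power) gives
\begin{equation*}
\abs{\seqA{a,b}}^{2r}\leq \frac{\beta}{1+\beta}\normA{a}^{2r}\normA{b}^{2r}+\frac{1}{1+\beta}\abs{\seqA{a,b}}^{2r}.
\end{equation*}
Substituting this into the previous display (allowable since the coefficient $\tfrac{1-\alpha}{2}$ is nonnegative) produces
\begin{equation*}
\abs{\seqA{a,e}\seqA{e,b}}^{2r}\leq \left[\frac{1+\alpha}{2}+\frac{(1-\alpha)\beta}{2(1+\beta)}\right]\normA{a}^{2r}\normA{b}^{2r}+\frac{1-\alpha}{2(1+\beta)}\abs{\seqA{a,b}}^{2r}.
\end{equation*}

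Finally, I would carry out the arithmetic on the bracketed coefficient:
\begin{equation*}
\frac{1+\alpha}{2}+\frac{(1-\alpha)\beta}{2(1+\beta)}=\frac{(1+\alpha)(1+\beta)+(1-\alpha)\beta}{2(1+\beta)}=\frac{1+\alpha+2\beta}{2(1+\beta)},
\end{equation*}
which yields exactly the claimed inequality \eqref{Major-1}. The main ``obstacle'' is really only keeping track of the algebraic simplification of the coefficient of $\normA{a}^{2r}\normA{b}^{2r}$; there is no analytic subtlety beyond the two ingredients (Lemma \ref{Lemma3.19} and Cauchy--Schwarz).
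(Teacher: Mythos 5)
Your proof is correct and follows essentially the same route as the paper: the paper likewise combines Lemma \ref{Lemma3.19} at exponent $2r$ (obtained there by squaring via convexity of $t^2$) with the $\beta$-weighted Cauchy--Schwarz split $\abs{\seqA{a,b}}^{2r}\leq \tfrac{\beta}{1+\beta}\normA{a}^{2r}\normA{b}^{2r}+\tfrac{1}{1+\beta}\abs{\seqA{a,b}}^{2r}$, then simplifies the coefficient exactly as you do. The only difference is cosmetic: you apply Cauchy--Schwarz directly at the $2r$ level rather than deriving the split at the squared level and invoking convexity of $t^r$.
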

\begin{proof} By similar discussion with Lemma \ref{Lemma2.4}, we have
$$\abs{\seqA{a,b}}^2\leq \abs{\seqA{a,b}}^2+\beta\bra{\normA{a}^2\normA{b}^2-\abs{\seqA{a,b}}^2}. $$
This indicates that
\begin{equation}\label{Major-2}
  \abs{\seqA{a,b}}^2\leq \frac{\beta}{1+\beta}\normA{a}^2\normA{b}^2+\frac{1}{1+\beta}\abs{\seqA{a,b}}^2.
\end{equation}
Applying the convexity of $f(t)=t^r\,(r\geq1)$ on the inequality (\ref{Major-2}) implies that
\begin{equation}\label{Major-3}
  \abs{\seqA{a,b}}^{2r}\leq \frac{\beta}{1+\beta}\normA{a}^{2r}\normA{b}^{2r}+\frac{1}{1+\beta}\abs{\seqA{a,b}}^{2r}.
\end{equation}
On the other hand, combine Lemma \ref{Lemma3.19} and the convexity of the function $f(t)=t^2$, it
can be obtained
\begin{equation}\label{Major-4}
  \abs{\seqA{a,e}\seqA{e,b}}^{2r}\leq \bra{\frac{1+\alpha}{2}}\normA{a}^{2r}\normA{b}^{2r}+\bra{\frac{1-\alpha}{2}}\abs{\seqA{a,b}}^{2r}.
\end{equation}
Consequently, according to the inequalities (\ref{Major-3}) and (\ref{Major-4}), one has
\begin{eqnarray*}
   \abs{\seqA{a,e}\seqA{e,b}}^{2r}&\leq& \bra{\frac{1+\alpha}{2}}\normA{a}^{2r}\normA{b}^{2r} \\
   &+& \bra{\frac{1-\alpha}{2}}\sbra{\frac{\beta}{1+\beta}\normA{a}^{2r}\normA{b}^{2r}+\frac{1}{1+\beta}\abs{\seqA{a,b}}^{2r}}\\
   &\leq&\bra{\frac{1+\alpha+2\beta}{2(1+\beta)}}\normA{a}^{2r}\normA{b}^{2r}+\bra{\frac{1-\alpha}{2(1+\beta)}}\abs{\seqA{a,b}}^{2r}.
\end{eqnarray*}
This completes the proof.
\end{proof}
\begin{remark} Lemma \ref{Lemma2.4} is a refinement of \cite[Lemma 2.3]{QHC} as it may be derived from our result by inputting $\alpha=0$ and $r=1$.
\end{remark}
As a consequence of  Lemma \ref{Lemma2.5}, we have
\begin{corollary}\label{Corollary2.7} For every $a,b,e\in\h$ with $\normA{e}=1$, we have
\begin{equation*}
  \abs{\seqA{a,e}\seqA{e,b}}^{2r}\leq \bra{\frac{1+\alpha+2\beta}{2(1+\beta)}}\normA{a}^{2r}\normA{b}^{2r}+\bra{\frac{1-\alpha}{2(1+\beta)}}\abs{\seqA{a,b}}^{r}\normA{a}^r\normA{b}^r
\end{equation*}
for every $\alpha\in [0,1]$, $r\geq 1$  and $\beta\geq 0$.
\end{corollary}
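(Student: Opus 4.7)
The statement differs from Lemma \ref{Lemma2.5} only in the final term: we want $\abs{\seqA{a,b}}^{r}\normA{a}^{r}\normA{b}^{r}$ in place of $\abs{\seqA{a,b}}^{2r}$. My plan is to start directly from Lemma \ref{Lemma2.5} and replace the $\abs{\seqA{a,b}}^{2r}$ term with the larger quantity $\abs{\seqA{a,b}}^{r}\normA{a}^{r}\normA{b}^{r}$ via a one-line use of the Cauchy--Schwarz inequality for the semi-inner product $\seqA{\cdot,\cdot}$.

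Concretely, I would first recall the Cauchy--Schwarz bound $\abs{\seqA{a,b}}\leq \normA{a}\normA{b}$, valid for any $a,b\in\h$. Raising both sides to the $r$-th power (which preserves the inequality since $r\geq 1$ and both sides are non-negative) yields $\abs{\seqA{a,b}}^{r}\leq \normA{a}^{r}\normA{b}^{r}$. Multiplying this by the non-negative quantity $\abs{\seqA{a,b}}^{r}$ gives
\[
\abs{\seqA{a,b}}^{2r}\;=\;\abs{\seqA{a,b}}^{r}\cdot\abs{\seqA{a,b}}^{r}\;\leq\;\abs{\seqA{a,b}}^{r}\normA{a}^{r}\normA{b}^{r}.
\]

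Substituting this estimate into the second summand on the right-hand side of Lemma \ref{Lemma2.5} and noting that its coefficient $\frac{1-\alpha}{2(1+\beta)}$ is non-negative for $\alpha\in[0,1]$ and $\beta\geq 0$, the claimed inequality follows immediately. There is no real obstacle here, since the work has already been done in Lemma \ref{Lemma2.5}; the corollary is simply a reformulation obtained by weakening the last term through Cauchy--Schwarz, which will be useful when we later want to bound expressions of the mixed form $\abs{\seqA{a,b}}^{r}\normA{a}^{r}\normA{b}^{r}$ rather than pure powers of $\abs{\seqA{a,b}}$.
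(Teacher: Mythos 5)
Your proof is correct and matches the paper's intent exactly: the paper states Corollary \ref{Corollary2.7} as an immediate consequence of Lemma \ref{Lemma2.5}, and the only step needed is precisely your Cauchy--Schwarz weakening $\abs{\seqA{a,b}}^{2r}\leq \abs{\seqA{a,b}}^{r}\normA{a}^{r}\normA{b}^{r}$ applied to the nonnegative-coefficient second term. Nothing further is required.
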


The following lemma can be found in \cite{QHB}.
\begin{lemma} Let $T\in\b_A(\h)$. Then for any $x,y\in\h$ with $\normA{x}=\normA{y}=1$, we have
$$\abs{\seqA{Tx,y}}^2\leq \sqrt{\seqA{\Ta T x,x}\seqA{T\Ta y,y}}.$$
\end{lemma}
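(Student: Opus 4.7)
The strategy is to bound $\abs{\seqA{Tx,y}}^2$ in two complementary ways using the semi-Hilbertian Cauchy--Schwarz inequality, and then combine them by multiplication and extraction of the positive square root. Since the target right-hand side $\sqrt{\seqA{\Ta T x,x}\seqA{T\Ta y,y}}$ is a geometric mean, the ``multiply two bounds'' approach is the natural route: one factor should come from a Cauchy--Schwarz estimate applied on the $x$-slot, the other from a Cauchy--Schwarz estimate applied on the $y$-slot.

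First I would apply $\abs{\seqA{u,v}}^2\le \normA{u}^2\normA{v}^2$ with $u=Tx$ and $v=y$, obtaining $\abs{\seqA{Tx,y}}^2\le \normA{Tx}^2$ because $\normA{y}=1$. The key algebraic manipulation is the identity
\[ \normA{Tx}^2 = \langle ATx, Tx\rangle = \langle T^*ATx, x\rangle = \langle A\Ta T x, x\rangle = \seqA{\Ta T x, x}, \]
where I use the defining relation $T^*A=A\Ta$ valid for every $T\in\b_A(\h)$. This yields the first bound $\abs{\seqA{Tx,y}}^2 \le \seqA{\Ta T x,x}$. Next, I would invoke the $A$-adjoint identity $\seqA{Tx,y}=\seqA{x,\Ta y}$, reapply Cauchy--Schwarz with $u=x$ and $v=\Ta y$, and compute
\[ \normA{\Ta y}^2 = \langle A\Ta y,\Ta y\rangle = \langle T^*Ay,\Ta y\rangle = \langle Ay, T\Ta y\rangle = \seqA{T\Ta y,y}, \]
where the final equality uses that $T\Ta$ is $A$-positive (noted in the introduction), so that $\seqA{T\Ta y,y}$ is a real nonnegative quantity. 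This yields the symmetric bound $\abs{\seqA{Tx,y}}^2\le \seqA{T\Ta y,y}$.

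Multiplying the two inequalities gives $\abs{\seqA{Tx,y}}^4 \le \seqA{\Ta T x,x}\,\seqA{T\Ta y,y}$, and taking the positive square root delivers the claim. I do not anticipate any serious obstacle: the only point requiring care is that both factors on the right are nonnegative real numbers, which follows from the $A$-positivity of $\Ta T$ and $T\Ta$ recorded in the introduction; everything else is a direct application of Cauchy--Schwarz combined with the relation $T^*A=A\Ta$.
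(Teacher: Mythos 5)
Your proof is correct and complete: the two Cauchy--Schwarz estimates $\abs{\seqA{Tx,y}}^2\le\normA{Tx}^2=\seqA{\Ta Tx,x}$ and $\abs{\seqA{x,\Ta y}}^2\le\normA{\Ta y}^2=\seqA{T\Ta y,y}$ (the latter using $A\Ta=T^*A$ and the $A$-positivity of $T\Ta$ to justify that the quantity is real and nonnegative) multiply to give exactly the claimed geometric-mean bound. The paper itself offers no proof of this lemma --- it is quoted from the cited reference --- and your double-Cauchy--Schwarz argument is the standard derivation, so there is nothing to compare beyond noting that your write-up is self-contained and careful about the one genuinely delicate point (the reality of $\seqA{T\Ta y,y}$).
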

In \cite{QHB}, the authors proved that for any $x,y,z\in\h$,
\begin{equation}\label{max1}
  \abs{\seqA{x,z}}^2+\abs{\seqA{y,z}}^2\leq \normA{z}^2\bra{\max\set{\normA{x}^2,\normA{y}^2}+\abs{\seqA{x,y}}}.
\end{equation}
And in \cite{XYZ}, they established that if $x,y,z\in\h$ with $\normA{z}=1$, then
\begin{equation}\label{max2}
  \abs{\seqA{x,z}\seqA{z,y}}\leq \frac{1}{2}\bra{\normA{x}\normA{y}+\abs{\seqA{x,y}}}.
\end{equation}
\begin{lemma}\label{Qadri1} For every $a,b,e\in\h$ with $\normA{e}=1$, we have
\begin{equation}\label{Result1}
  \abs{\seqA{a,e}}+\abs{\seqA{e,b}}\leq \sqrt{\bra{\normA{a}+\normA{b}}\max\set{\normA{a},\normA{b}}
  +2\abs{\seqA{a,b}}}.
\end{equation}
\end{lemma}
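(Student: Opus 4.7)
The plan is to square both sides and then bound the expanded left-hand side by combining the two auxiliary inequalities (\ref{max1}) and (\ref{max2}) that the paper has just recorded. Setting $L := \abs{\seqA{a,e}}+\abs{\seqA{e,b}}$, I would start from
\[
L^{2} = \abs{\seqA{a,e}}^{2}+\abs{\seqA{e,b}}^{2} + 2\abs{\seqA{a,e}}\,\abs{\seqA{e,b}},
\]
so that the two sums of squares and the cross term can be attacked independently. The right-hand side of the target inequality is the square of the expression under the radical, so it suffices to prove the corresponding squared inequality.

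Next, I would apply (\ref{max1}) with $x=a$, $y=b$, $z=e$ (and use $\normA{e}=1$) to obtain
\[
\abs{\seqA{a,e}}^{2}+\abs{\seqA{e,b}}^{2}\leq \max\{\normA{a}^{2},\normA{b}^{2}\}+\abs{\seqA{a,b}},
\]
using the symmetry $\abs{\seqA{e,b}}=\abs{\seqA{b,e}}$. For the cross term I would use (\ref{max2}) to get
\[
2\abs{\seqA{a,e}}\,\abs{\seqA{e,b}} \leq \normA{a}\normA{b}+\abs{\seqA{a,b}}.
\]
Adding the two gives an upper bound for $L^{2}$ in terms of $\max\{\normA{a}^{2},\normA{b}^{2}\}$, $\normA{a}\normA{b}$, and $\abs{\seqA{a,b}}$.

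The only remaining step is the identity
\[
(\normA{a}+\normA{b})\max\{\normA{a},\normA{b}\}
=\max\{\normA{a}^{2},\normA{b}^{2}\}+\normA{a}\normA{b},
\]
which holds because, assuming without loss of generality $\normA{a}\geq\normA{b}$, both sides equal $\normA{a}^{2}+\normA{a}\normA{b}$. Substituting this into the bound for $L^{2}$ produces exactly the squared form of (\ref{Result1}); taking square roots finishes the proof. I do not anticipate any real obstacle: the work is essentially algebraic once the two lemmas are combined, and the main thing to watch is the bookkeeping that the coefficients from (\ref{max1}) and (\ref{max2}) match up so that the $\max$-plus-product identity can be invoked cleanly.
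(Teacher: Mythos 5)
Your proposal is correct and follows essentially the same route as the paper: square, split into the sum of squares (bounded via (\ref{max1})) and the cross term (bounded via (\ref{max2})), then recombine using the identity $(\normA{a}+\normA{b})\max\{\normA{a},\normA{b}\}=\max\{\normA{a}^{2},\normA{b}^{2}\}+\normA{a}\normA{b}$. No gaps.
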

\begin{proof} Utilizing the inequalities (\ref{max1}) and (\ref{max2}), we deduce that
\begin{eqnarray*}
  \bra{\abs{\seqA{a,e}}+\abs{\seqA{e,b}}}^2 &=&\abs{\seqA{a,e}}^2+\abs{\seqA{e,b}}^2+2\abs{\seqA{a,e}}\abs{\seqA{e,b}} \\
   &\leq&\bra{\max\set{\normA{a}^2,\normA{b}^2}+\abs{\seqA{a,b}}}+  \bra{\normA{a}\normA{b}+\abs{\seqA{a,b}}}\\
   &=& \bra{\normA{a}+\normA{b}}\max\set{\normA{a},\normA{b}}
  +2\abs{\seqA{a,b}}.
\end{eqnarray*}
\end{proof}
Combining Lemma \ref{Lemma3.18} and \cite[Lemma 2.5]{QHB}, we have
\begin{lemma}\label{Comb1} Let $a,b,e\in\h$ with $\normA{e}=1$. Then
\begin{eqnarray}\label{COmb-A1}
 \bra{\abs{\seqA{a,e}}+\abs{\seqA{e,b}}}^2 &\leq&\sqrt{\abs{\seqA{a,a}}^2+\abs{\seqA{b,b}}^2+2\abs{\seqA{a,b}}^2}\nonumber  \\
   &+& (1+\alpha)\normA{a}\normA{b}+(1-\alpha)\abs{\seqA{a,b}}
\end{eqnarray}
 for every $\alpha\in [0,1]$.
\end{lemma}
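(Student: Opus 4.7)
The plan is to expand the square on the left-hand side, producing a diagonal part $\abs{\seqA{a,e}}^2+\abs{\seqA{e,b}}^2$ and a cross term $2\abs{\seqA{a,e}}\abs{\seqA{e,b}} = 2\abs{\seqA{a,e}\seqA{e,b}}$, then control these two pieces by the two results indicated in the statement.

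First I would handle the cross term by a direct application of Lemma \ref{Lemma3.18} to $|\seqA{a,e}\seqA{e,b}|$, which yields
\[
2\abs{\seqA{a,e}\seqA{e,b}} \leq (1+\alpha)\normA{a}\normA{b} + (1-\alpha)\abs{\seqA{a,b}},
\]
matching exactly the second line on the right-hand side of the target inequality. This step requires nothing beyond Lemma \ref{Lemma3.18}, so it should be routine.

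Next I would bound the diagonal sum $\abs{\seqA{a,e}}^2+\abs{\seqA{e,b}}^2$ using \cite[Lemma 2.5]{QHB}, which is the $A$-version of the Buzano/Dragomir-type inequality
\[
\abs{\seqA{a,e}}^{2}+\abs{\seqA{e,b}}^{2} \leq \sqrt{\abs{\seqA{a,a}}^{2}+\abs{\seqA{b,b}}^{2}+2\abs{\seqA{a,b}}^{2}},
\]
valid whenever $\normA{e}=1$. (Recall that $\seqA{a,a}=\normA{a}^{2}\geq 0$, so this is the same as $\sqrt{\normA{a}^{4}+\normA{b}^{4}+2\abs{\seqA{a,b}}^{2}}$.) This supplies precisely the square-root term appearing in \eqref{COmb-A1}.

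Finally, adding the two bounds gives the claimed inequality. The main conceptual obstacle is not in the algebra — which is merely a sum of two independent estimates — but in recognising that the square root on the right-hand side comes from an $A$-analogue of a Buzano-type inequality rather than from a direct Cauchy--Schwarz estimate; once that lemma is invoked, the two halves of the bound combine without any free parameter interaction, so no optimisation over $\alpha$ is required. I would conclude by noting that equality conditions inherit from those of Lemma \ref{Lemma3.18} and \cite[Lemma 2.5]{QHB}.
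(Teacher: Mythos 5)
Your proposal is correct and matches the paper's argument: the paper presents Lemma \ref{Comb1} as an immediate consequence of ``combining Lemma \ref{Lemma3.18} and \cite[Lemma 2.5]{QHB}'', which is precisely your decomposition of the squared sum into the diagonal part $\abs{\seqA{a,e}}^2+\abs{\seqA{e,b}}^2$ (controlled by the cited Buzano-type lemma, giving the square-root term) and the cross term $2\abs{\seqA{a,e}\seqA{e,b}}$ (controlled by Lemma \ref{Lemma3.18}, giving $(1+\alpha)\normA{a}\normA{b}+(1-\alpha)\abs{\seqA{a,b}}$). No further comment is needed.
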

\section{Some upper bounds of $A$-numerical radius}
The main goal of this section is to derive several upper bounds for $A$-numerical
radius which are refinements of some existing ones.
\begin{theorem}\label{Thm-3.1} Let $T\in\b_A(\h)$, $\alpha\in[0,1]$ and $\beta\geq 0$. Then
\begin{equation}\label{Ineq3.1}
  w_A^4(T)\leq \frac{\gamma_1}{16}\normA{\Ta T+T\Ta}^2+ \frac{\gamma_2}{8}\normA{\Ta T+T\Ta}w_A(T^2),
\end{equation}
where $\gamma_1=\frac{(2\beta+1)(1+\alpha^2)+2\alpha}{1+\beta}$ and $\gamma_2=\frac{\bra{1-\alpha}\sbra{2+(1+\alpha)(1+2\beta)}}{1+\beta}$.
\end{theorem}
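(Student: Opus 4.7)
The plan is to apply Lemma~\ref{Lemma2.4} to vectors tailored so that its left-hand side becomes $\abs{\seqA{Tx,x}}^{4}$, translate the right-hand side into quantities involving $\Ta T + T\Ta$ and $T^{2}$, and then pass to the supremum over $x$ with $\normA{x}=1$.

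First, I would fix $x \in \h$ with $\normA{x} = 1$ and choose $a = Tx$, $e = x$, $b = \Ta x$ in Lemma~\ref{Lemma2.4}. Since $\seqA{Tu,v} = \seqA{u,\Ta v}$ for all $u,v \in \h$, one has $\seqA{e,b} = \seqA{x,\Ta x} = \seqA{Tx,x} = \seqA{a,e}$, so the product on the left of (\ref{eq:main_inequality}) collapses to $\seqA{Tx,x}^{2}$ and the bound yields $\abs{\seqA{Tx,x}}^{4}$ on the left. The same $A$-adjoint identity gives the translations $\normA{Tx}^{2} = \seqA{\Ta T x,x}$, $\normA{\Ta x}^{2} = \seqA{T\Ta x,x}$, and $\seqA{Tx,\Ta x} = \seqA{T^{2}x,x}$. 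The first two quantities are nonnegative because $\Ta T$ and $T\Ta$ are $A$-positive, which will be essential in the next step.

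Second, I would apply the elementary inequalities $uv \leq \bra{(u+v)/2}^{2}$ and $\sqrt{uv} \leq (u+v)/2$ with $u = \seqA{\Ta T x,x}$ and $v = \seqA{T\Ta x,x}$. This replaces the product $\normA{Tx}^{2}\normA{\Ta x}^{2}$ and the expression $\normA{Tx}\normA{\Ta x}$ appearing in Lemma~\ref{Lemma2.4} by the single nonnegative quantity $\tfrac{1}{2}\seqA{(\Ta T + T\Ta)x,x}$ raised to the appropriate power, producing
\[
\abs{\seqA{Tx,x}}^{4} \leq \frac{\gamma_{1}}{16}\seqA{(\Ta T + T\Ta)x,x}^{2} + \frac{\gamma_{2}}{8}\seqA{(\Ta T + T\Ta)x,x}\abs{\seqA{T^{2}x,x}}.
\]

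Finally, I would take the supremum over $\normA{x} = 1$. Because $\Ta T + T\Ta$ is $A$-positive (hence $A$-selfadjoint), the $A$-operator seminorm coincides with its $A$-numerical radius, so $\sup_{\normA{x}=1}\seqA{(\Ta T + T\Ta)x,x} = \normA{\Ta T + T\Ta}$, while by definition $\sup_{\normA{x}=1}\abs{\seqA{T^{2}x,x}} = w_A(T^{2})$. Bounding each nonnegative factor on the right by its supremum then delivers (\ref{Ineq3.1}). The only delicate point is the bookkeeping in Step~1: one must be attentive that the choice $b = \Ta x$ makes both $\seqA{a,e}$ and $\seqA{e,b}$ equal to $\seqA{Tx,x}$, and that $\seqA{Tx,\Ta x}$ produces $\seqA{T^{2}x,x}$ rather than $\seqA{(\Ta)^{2}x,x}$; after that, the remaining steps are a routine AM-GM reduction followed by the standard supremum identities.
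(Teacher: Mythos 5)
Your proposal is correct and follows essentially the same route as the paper's own proof: apply Lemma~\ref{Lemma2.4} with $a=Tx$, $e=x$, $b=\Ta x$, rewrite the resulting norms via $\normA{Tx}^2=\seqA{\Ta Tx,x}$, $\normA{\Ta x}^2=\seqA{T\Ta x,x}$, $\seqA{Tx,\Ta x}=\seqA{T^2x,x}$, reduce by the arithmetic--geometric mean inequality to $\tfrac12\seqA{(\Ta T+T\Ta)x,x}$, and take the supremum using the $A$-selfadjointness of $\Ta T+T\Ta$. No gaps; the bookkeeping you flag as delicate is handled exactly as in the paper.
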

\begin{proof} Let $x\in\h$ with $\normA{x}=1$. Then
\begin{eqnarray*}
  \abs{\seqA{Tx,x}}^4 &=& \abs{\seqA{Tx,x}\seqA{x,\Ta x}}^2 \\
   &\leq& \frac{\gamma_1}{4}\normA{Tx}^2\normA{\Ta x}^2+\frac{\gamma_2}{4}\normA{Tx}\normA{\Ta x}\abs{\seqA{Tx,\Ta x}}\\
  &&\bra{\mbox{by Lemma \ref{Lemma2.4}}}\\
  &=&\frac{\gamma_1}{4}\bra{\sqrt{\seqA{\Ta Tx,x}\seqA{T\Ta x,x}}}^2+\frac{\gamma_2}{4}\sqrt{\seqA{\Ta Tx,x}\seqA{T\Ta x,x}}\abs{\seqA{T^2x,x}}\\
  &\leq&\frac{\gamma_1}{16}\seqA{\bra{\Ta T+T\Ta}x,x}^2+ \frac{\gamma_2}{8}\seqA{\bra{\Ta T+T\Ta}x,x}\abs{\seqA{T^2x,x}}\\
  &&\bra{\mbox{by the arithmetic-geometric mean inequality}}\\
  &\leq&\frac{\gamma_1}{16}\normA{\Ta T+T\Ta}^2+ \frac{\gamma_2}{8}\normA{\Ta T+T\Ta}w_A(T^2).
\end{eqnarray*}
Taking the supremum over all vectors $x\in\h$ with $\normA{x}=1$ in the above inequality, we deduce that
$$w_A^4(T)\leq \frac{\gamma_1}{16}\normA{\Ta T+T\Ta}^2+ \frac{\gamma_2}{8}\normA{\Ta T+T\Ta}w_A(T^2).$$
This completes the proof.
\end{proof}
\begin{remark} The inequality (\ref{IN-7}) may be derived by putting $\alpha=0$ in Theorem \ref{Thm-3.1}, hence Theorem \ref{Thm-3.1} is a refinement of that inequality.
\end{remark}
To show that Theorem \ref{Thm-3.1} is a nontrivial improvement of the inequalities  (\ref{IN-6}), we give the following example.
\begin{example} Let $T=\begin{bmatrix} 1& 0 \\ 1 & 1 \\ \end{bmatrix}$, $A=\begin{bmatrix} 1 & -1 \\ -1 & 2 \\ \end{bmatrix}$,
$\alpha=0.5$ and $\beta=1$. Then elementary calculations show that
$\Ta=\begin{bmatrix} -1& 4 \\  -1&  3\\ \end{bmatrix}$, $\Ta T+T\Ta=\begin{bmatrix} 2& 8 \\  0&  10\\ \end{bmatrix}$,
$$\normA{\Ta T+T\Ta}\approx 12.385, w_A(T)=2, w_A(T^2)\approx 2.3, \gamma_1=\frac{19}{8}\,\,\mbox{and}\,\, \gamma_2=\frac{13}{8}.$$
Hence, the right-side of inequality (\ref{Ineq3.1}) gives
$$\sqrt[4]{\frac{\gamma_1}{16}\normA{\Ta T+T\Ta}^2+ \frac{\gamma_2}{8}\normA{\Ta T+T\Ta}w_A(T^2)}\approx 2.31$$
While the right-side of  (\ref{IN-6})gives
$$\sqrt[4]{\frac{3}{16}\normA{\Ta T+T\Ta}^2+\frac{1}{8}\normA{\Ta T+T\Ta}w_A(T^2)}\approx 2.39.$$
Moreover,
$$2=w_A(T)\leq \sqrt[4]{\frac{\gamma_1}{16}\normA{\Ta T+T\Ta}^2+ \frac{\gamma_2}{8}\normA{\Ta T+T\Ta}w_A(T^2)}\approx 2.31.$$
So, the example also illustrate the validity of Theorem \ref{Thm-3.1}.
\end{example}
\begin{theorem}\label{Thm3.2} Let $T\in\b_A(\h)$, $\alpha\in[0,1]$, $r\geq 1$ and $\beta\geq 0$. Then
\begin{equation}\label{Ineq3.2}
  w_A^{4r}\leq \frac{\delta_1}{4}\normA{\bra{\Ta T}^{r}+\bra{T\Ta}^{r}}^2+\delta_2w_A^{2r}(T^2),
\end{equation}
where $\delta_1=\bra{\frac{1+\alpha+2\beta}{2(1+\beta)}}$ and $\delta_2=\bra{\frac{1-\alpha}{2(1+\beta)}}$.
\end{theorem}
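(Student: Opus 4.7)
The plan is to mimic the structure of the proof of Theorem \ref{Thm-3.1}, with Lemma \ref{Lemma2.5} playing the role of Lemma \ref{Lemma2.4} and an extra step to move the exponent $r$ inside the $A$-inner products. Fix $x\in\h$ with $\normA{x}=1$. Since $A\Ta=T^{*}A$ and $A$ is selfadjoint, the computations $\seqA{\Ta x,x}=\langle T^{*}Ax,x\rangle=\overline{\seqA{Tx,x}}$ and $\seqA{Tx,\Ta x}=\langle Tx,A\Ta x\rangle=\seqA{T^{2}x,x}$ give, respectively, $\seqA{x,\Ta x}=\seqA{Tx,x}$ and the handy identity $\abs{\seqA{Tx,\Ta x}}=\abs{\seqA{T^{2}x,x}}$. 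The first of these yields
$$\abs{\seqA{Tx,x}}^{4r}=\abs{\seqA{Tx,x}\,\seqA{x,\Ta x}}^{2r},$$
and applying Lemma \ref{Lemma2.5} with $a=Tx$, $b=\Ta x$, $e=x$ gives
$$\abs{\seqA{Tx,x}}^{4r}\leq \delta_{1}\normA{Tx}^{2r}\normA{\Ta x}^{2r}+\delta_{2}\abs{\seqA{Tx,\Ta x}}^{2r}.$$

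For the first summand, I rewrite $\normA{Tx}^{2r}=\seqA{\Ta T x,x}^{r}$ and $\normA{\Ta x}^{2r}=\seqA{T\Ta x,x}^{r}$, and then push the exponent $r$ inside each factor via the Hölder--McCarthy-type inequality $\seqA{Sx,x}^{r}\leq \seqA{S^{r}x,x}$, which holds for any $A$-positive $S$ and any $x$ with $\normA{x}=1$; this applies to both $S=\Ta T$ and $S=T\Ta$, since $\Ta T$ and $T\Ta$ are $A$-positive. The arithmetic--geometric mean inequality then yields
$$\seqA{(\Ta T)^{r}x,x}\,\seqA{(T\Ta)^{r}x,x}\leq \frac{1}{4}\seqA{\bra{(\Ta T)^{r}+(T\Ta)^{r}}x,x}^{2}.$$
As $(\Ta T)^{r}+(T\Ta)^{r}$ is a sum of $A$-positive operators, it is $A$-selfadjoint, so the formula $\normA{S}=\sup\set{\abs{\seqA{Sx,x}}:\normA{x}=1}$ recalled in the preliminaries bounds the right-hand side by $\frac{1}{4}\normA{(\Ta T)^{r}+(T\Ta)^{r}}^{2}$.

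For the second summand, $\abs{\seqA{Tx,\Ta x}}^{2r}=\abs{\seqA{T^{2}x,x}}^{2r}\leq w_{A}^{2r}(T^{2})$. Combining everything and taking the supremum over $x\in\h$ with $\normA{x}=1$ yields the claim. I expect the main obstacle to be invoking the McCarthy-type inequality $\seqA{Sx,x}^{r}\leq\seqA{S^{r}x,x}$ in the semi-Hilbertian setting: it is the natural generalization of the classical Hölder--McCarthy inequality and is standard in this line of work, but its use must be made clear since it is what converts the product of inner products $\seqA{\Ta Tx,x}^{r}\seqA{T\Ta x,x}^{r}$ into the target expression $\normA{(\Ta T)^{r}+(T\Ta)^{r}}^{2}$. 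The remaining ingredients---Lemma \ref{Lemma2.5}, the adjoint identities, and the arithmetic--geometric mean step---are routine once that tool is in place.
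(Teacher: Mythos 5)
Your proposal is correct and follows essentially the same route as the paper's proof: apply Lemma \ref{Lemma2.5} to $\abs{\seqA{Tx,x}\seqA{x,\Ta x}}^{2r}$, convert the norm product via the H\"older--McCarthy inequality into $\seqA{(\Ta T)^{r}x,x}\seqA{(T\Ta)^{r}x,x}$, finish with the arithmetic--geometric mean inequality and the supremum. The only difference is that you spell out the adjoint identities and the $A$-selfadjointness justification for passing to $\normA{(\Ta T)^{r}+(T\Ta)^{r}}$, which the paper leaves implicit.
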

\begin{proof} Let $x\in\h$ with $\normA{x}=1$. Then
\begin{eqnarray*}
  \abs{\seqA{Tx,x}}^{4r} &=& \abs{\seqA{Tx,x}\seqA{x,\Ta x}}^{2r}  \\
   &\leq& \delta_1\normA{Tx}^{2r}\normA{\Ta x}^{2r}+\delta_2\abs{\seqA{Tx,\Ta x}}^{2r}\\
   &&\bra{\mbox{by Lemma \ref{Lemma2.5}}}\\
   &\leq& \delta_1\bra{\sqrt{\seqA{\Ta Tx,x}^{r}\seqA{T\Ta x,x}^{r}}}^2+\delta_2\abs{\seqA{T^2x, x}}^{2r}\\
   &\leq&\delta_1\bra{\sqrt{\seqA{\bra{\Ta T}^{r}x,x}\seqA{\bra{T\Ta}^{r} x,x}}}^2+\delta_2\abs{\seqA{T^2x, x}}^{2r}\\
   &&\bra{\mbox{by the H\"older McCarthy inequality}}\\
   &\leq& \frac{\delta_1}{4}\seqA{\bra{\bra{\Ta T}^{r}+\bra{T\Ta}^{r}}x,x}^2 +\delta_2\abs{\seqA{T^2x, x}}^{2r}\\
   &&\bra{\mbox{by the arithmetic-geometric mean inequality}}\\
   &\leq& \frac{\delta_1}{4}\normA{\bra{\Ta T}^{r}+\bra{T\Ta}^{r}}^2+\delta_2w_A^{2r}(T^2).
\end{eqnarray*}
Taking the supremum over all vectors $x\in\h$ with $\normA{x}=1$ in the above inequality, we deduce that
$$w_A^{4r}\leq \frac{\delta_1}{4}\normA{\bra{\Ta T}^{r}+\bra{T\Ta}^{r}}^2+\delta_2w_A^{2r}(T^2).$$
This completes the proof.
\end{proof}
\begin{remark}(i) If we put $r=1$ and $\alpha=0$, we obtain the inequality (\ref{IN-8}). So, Theorem \ref{Thm3.2}
is a refinement of that inequality. \\
(ii) If we set $r=1$ in the inequality (\ref{Ineq3.2}), we deduce that the inequality (\ref{Ineq3.2}) is sharper than the inequality
(\ref{IN-3}). Indeed,
\begin{eqnarray*}
  w_A^{4}(T)&\leq& \bra{\frac{1+\alpha+2\beta}{8(1+\beta)}}\normA{\Ta T+T\Ta}^2+\bra{\frac{1-\alpha}{2(1+\beta)}}w_A^{2}(T^2) \\
   &\leq& \bra{\frac{1+\alpha+2\beta}{8(1+\beta)}}\normA{\Ta T+T\Ta}^2+\bra{\frac{1-\alpha}{2(1+\beta)}}w_A^{4}(T)\\
   &\leq& \bra{\frac{1+\alpha+2\beta}{8(1+\beta)}}\normA{\Ta T+T\Ta}^2+\bra{\frac{1-\alpha}{8(1+\beta)}}\normA{\Ta T+T\Ta}^2\\
   &=& \frac{1}{4}\normA{\Ta T+T\Ta}^2.
\end{eqnarray*}
\end{remark}
Now, we give an example to show that Theorem \ref{Thm3.2} is a nontrivial improvement
of inequality (\ref{IN-3}).
\begin{example}Let $T=\begin{bmatrix} 1& 0 \\ 1 & 1 \\ \end{bmatrix}$, $A=\begin{bmatrix} 1 & -1 \\ -1 & 2 \\ \end{bmatrix}$,
$\alpha=0.5$, $r=1$ and $\beta=1$. Then elementary calculations show that
$\Ta=\begin{bmatrix} -1& 4 \\  -1&  3\\ \end{bmatrix}$, $\Ta T+T\Ta=\begin{bmatrix} 2& 8 \\  0&  10\\ \end{bmatrix}$,
$$\normA{\Ta T+T\Ta}\approx 12.385, w_A(T)=2, w_A^2(T^2)\approx 5.29, \delta_1=0.875\,\,\mbox{and}\,\, \delta_2=0.125.$$
  Hence,
  $$\bra{\frac{1+\alpha+2\beta}{8(1+\beta)}}\normA{\Ta T+T\Ta}^2+\bra{\frac{1-\alpha}{2(1+\beta)}}w_A^{2}(T^2)\approx 34.21$$
  and
  $$\frac{1}{4}\normA{\Ta T+T\Ta}^2\approx 38.34.$$
  Consequently,
  $$\bra{\frac{1+\alpha+2\beta}{8(1+\beta)}}\normA{\Ta T+T\Ta}^2+\bra{\frac{1-\alpha}{2(1+\beta)}}w_A^{2}(T^2)<\frac{1}{4}\normA{\Ta T+T\Ta}^2.$$
  Moreover, we can use this example to illustrate the validity of Theorem \ref{Thm3.2}, since
  $$2\approx w_A(T)\leq \sqrt[4]{\bra{\frac{1+\alpha+2\beta}{8(1+\beta)}}\normA{\Ta T+T\Ta}^2+\bra{\frac{1-\alpha}{2(1+\beta)}}w_A^{2}(T^2)}\approx
  2.41.$$
\end{example}
\begin{remark} If we taking $\alpha=0,\beta=0$ and $r=1$ in Theorem \ref{Thm3.2}, we will obtain that
$$w_A^{4}(T)\leq \frac{1}{8}\normA{\Ta T+T\Ta}^2+\frac{1}{8}w_A^2(T^2)$$
which is sharper than the inequality (\ref{IN-6}). Indeed,
\begin{eqnarray*}
  \frac{1}{8}\normA{\Ta T+T\Ta}^2+\frac{1}{8}w_A^2(T^2) &\leq& \frac{1}{8}\normA{\Ta T+T\Ta}^2+\frac{1}{4}w_A^4+\frac{1}{4}w_A^2(T)w_A(T^2)\\
   &\leq& \frac{1}{8}\normA{\Ta T+T\Ta}^2+\frac{1}{16}\normA{\Ta T+T\Ta}^2\\
   &+&\frac{1}{8}\normA{\Ta T+T\Ta}^2w_A(T^2)\\
   &=&\frac{3}{16}\normA{\Ta T+T\Ta}^2+\frac{1}{8}\normA{\Ta T+T\Ta}^2w_A(T^2).
\end{eqnarray*}
Therefore,
$$w_A^{4}(T)\leq \frac{1}{8}\normA{\Ta T+T\Ta}^2+\frac{1}{8}w_A^2(T^2)\leq \frac{3}{16}\normA{\Ta T+T\Ta}^2+\frac{1}{8}\normA{\Ta T+T\Ta}^2w_A(T^2).$$
\end{remark}
The following two theorems give the new upper bounds for $w_A^{4r}(\Sa T)$.
\begin{theorem} Let $T,S\in\b_A(\h)$, $\alpha\in [0,1],r\geq 1$ and $\beta\geq 0$. Then
\begin{eqnarray*}
  w_A^{4r} (\Sa T)&\leq& \bra{\frac{(2\beta+1)(1+\alpha^2)+2\alpha}{16(1+\beta)}}\normA{\bra{\Ta T}^{2r}+\bra{\Sa S}^{2r}}^2\nonumber\\
   &+& \bra{\frac{\bra{1-\alpha}\sbra{2+(1+\alpha)(1+2\beta)}}{8(1+\beta)}}\normA{\bra{\Ta T}^{2r}+\bra{\Sa S}^{2r}}w_A^{r}(\Sa S\Ta T).
\end{eqnarray*}
\end{theorem}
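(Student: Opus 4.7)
Plan: The proof would follow the template already set by Theorems \ref{Thm-3.1} and \ref{Thm3.2}. Fix $x\in\h$ with $\normA{x}=1$ and use the defining property of the $A$-adjoint to rewrite $\seqA{\Sa T x,x} = \seqA{Tx,Sx}$ (this identity is valid on $\overline{\r(A)}$, which is where the $A$-numerical radius is attained). Then apply Corollary \ref{COLLORY-ONE} with the exponent $2r$ to $a = Tx$ and $b = Sx$, choosing a unit $e$ so that $\seqA{a,e}\seqA{e,b} = \seqA{Tx,Sx}$ (e.g.\ $e = Sx/\normA{Sx}$ when $\normA{Sx}\neq 0$). This delivers the scalar inequality
\[
\abs{\seqA{Tx,Sx}}^{4r}\le \frac{\gamma_1}{4}\normA{Tx}^{4r}\normA{Sx}^{4r} + \frac{\gamma_2}{4}\normA{Tx}^{2r}\normA{Sx}^{2r}\abs{\seqA{Tx,Sx}}^{2r},
\]
which serves as the backbone of the argument.

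I would then dispose of the pure norm products using the H\"older--McCarthy inequality in tandem with the arithmetic--geometric mean. Since $\Ta T$ and $\Sa S$ are $A$-positive and $2r\ge 1$, $\normA{Tx}^{4r} = \seqA{\Ta T x,x}^{2r}\le \seqA{(\Ta T)^{2r}x,x}$ and likewise $\normA{Sx}^{4r}\le \seqA{(\Sa S)^{2r}x,x}$; multiplying and applying AM--GM gives $\normA{Tx}^{4r}\normA{Sx}^{4r}\le \tfrac14\normA{(\Ta T)^{2r}+(\Sa S)^{2r}}^2$, whose square root yields the matching estimate $\normA{Tx}^{2r}\normA{Sx}^{2r}\le \tfrac12\normA{(\Ta T)^{2r}+(\Sa S)^{2r}}$. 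For the remaining cross factor I would combine Cauchy--Schwarz in $\seqA{\cdot,\cdot}$ with H\"older--McCarthy and the $A$-numerical-radius power inequality $w_A(B^r)\le w_A^r(B)$ to arrive at $\abs{\seqA{Tx,Sx}}^{2r}\le w_A^r(\Sa S\Ta T)$. Assembling the three pieces inside the Corollary \ref{COLLORY-ONE} bound and then taking the supremum over $x$ with $\normA{x}=1$ produces the announced inequality.

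The most delicate step, and the one carrying the real content of the theorem, is the final estimate $\abs{\seqA{Tx,Sx}}^{2r}\le w_A^r(\Sa S\Ta T)$: the naive Cauchy--Schwarz chain $\abs{\seqA{\Sa T x,x}}^2\le \normA{\Sa T x}^2 = \seqA{(\Sa T)^{\sharp_A}\Sa T\,x,x} = \seqA{\Ta S\Sa T\,x,x}$ yields the operator $\Ta S\Sa T$ rather than $\Sa S\Ta T$, which is a genuinely different product in general. To route the inequality through $(\Sa S)(\Ta T)$ I would exploit the factorisation $\abs{\seqA{Tx,Sx}}^2 = \seqA{\Sa T x,x}\,\seqA{\Ta S x,x}$ coming from complex conjugation, together with the $A$-positivity and $A$-selfadjointness of $\Sa S$ and $\Ta T$ (so that $\seqA{\Sa S\Ta T x,x} = \seqA{\Ta T x,\Sa S x}$), and apply a suitable form of Cauchy--Schwarz in the $A$-semi-inner product. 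Once this reduction is secured the remaining arithmetic is a routine adaptation of the calculations in the proofs of Theorems \ref{Thm-3.1} and \ref{Thm3.2}.
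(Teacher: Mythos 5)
Your overall architecture is sound up to its last step, but the estimate you yourself single out as ``the most delicate step,'' namely $\abs{\seqA{Tx,Sx}}^{2r}\le w_A^{r}(\Sa S\Ta T)$ (equivalently $\abs{\seqA{\Sa Tx,x}}^{2}\le w_A(\Sa S\Ta T)$ for $\normA{x}=1$), is not merely delicate: it is false, and no form of Cauchy--Schwarz in the $A$-semi-inner product can rescue it. Take $A=I$, $T=\left[\begin{smallmatrix}1&0\\0&0\end{smallmatrix}\right]$, $S=\left[\begin{smallmatrix}0&1\\0&0\end{smallmatrix}\right]$ and $x=\tfrac{1}{\sqrt2}(1,1)^{T}$. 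Then $\seqA{Tx,Sx}=\tfrac12$, so the left-hand side equals $\tfrac14$, whereas $\Sa S\,\Ta T=\mathrm{diag}(0,1)\,\mathrm{diag}(1,0)=0$, so $w_A(\Sa S\Ta T)=0$. The same example kills the operator-level version $w_A^{2}(\Sa T)\le w_A(\Sa S\Ta T)$, since $w(S^{*}T)=\tfrac12$. The root of the difficulty is structural: by applying Corollary \ref{COLLORY-ONE} to $a=Tx$, $b=Sx$ you force the cross term to be $\abs{\seqA{Tx,Sx}}=\abs{\seqA{\Sa Tx,x}}$, which can only be dominated by $w_A(\Sa T)$ itself; there is no route from that quantity to $w_A(\Sa S\Ta T)$, and your fallback factorisation $\abs{\seqA{Tx,Sx}}^{2}=\seqA{\Sa Tx,x}\seqA{\Ta Sx,x}$ does not change this, as the counterexample shows. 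What your argument actually yields is the implicit bound $w_A^{4r}(\Sa T)\le\frac{\gamma_1}{16}N^{2}+\frac{\gamma_2}{8}N\,w_A^{2r}(\Sa T)$ with $N=\normA{(\Ta T)^{2r}+(\Sa S)^{2r}}$, which is a different statement.

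The paper avoids the obstruction by reordering the two main moves: it performs the Cauchy--Schwarz reduction \emph{first}, writing $\abs{\seqA{\Sa Tx,x}}^{4r}\le\normA{Tx}^{4r}\normA{Sx}^{4r}=\abs{\seqA{\Ta Tx,x}\seqA{x,(\Sa S)^{\sharp_A}x}}^{2r}$, and only then applies Corollary \ref{COLLORY-ONE}, with $a=\Ta Tx$, $b=(\Sa S)^{\sharp_A}x$ and $e=x$. With this choice the cross term becomes $\seqA{a,b}=\seqA{\Sa S\Ta Tx,x}$, which is bounded by $w_A(\Sa S\Ta T)$ immediately, while the pure norm products are handled by H\"older--McCarthy and the arithmetic--geometric mean exactly as in your steps for $\normA{Tx}^{4r}\normA{Sx}^{4r}$ and its square root (those two steps of yours are correct and coincide with the paper's). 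So the fix is not a cleverer Cauchy--Schwarz at the end, but swapping the order in which Cauchy--Schwarz and the key lemma are applied so that the lemma sees the vectors $\Ta Tx$ and $(\Sa S)^{\sharp_A}x$ rather than $Tx$ and $Sx$.
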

\begin{proof} Let $x\in\h$ with $\normA{x}=1$. Then
\begin{eqnarray*}
 &&\abs{\seqA{Mx,x}\seqA{x,\Za x}}^{2r}\leq \bra{\frac{1+\alpha+2\beta}{2(1+\beta)}}\normA{Mx}^{2r}\normA{\Za x}^{2r}\\
 &+&\bra{\frac{\bra{1-\alpha}\sbra{2+(1+\alpha)(1+2\beta)}}{4(1+\beta)}}\abs{\seqA{Mx,\Za x}}^{r}\normA{Mx}^r\normA{\Za x}^r \\
   &&\bra{\mbox{by Corollary \ref{COLLORY-ONE}}}\\
   &\leq&\bra{\frac{(2\beta+1)(1+\alpha^2)+2\alpha}{4(1+\beta)}}\bra{\sqrt{\seqA{\Ma Mx,x}^{r}\seqA{Z\Za x,x}^{r}}}^{2}\\
   &+&\bra{\frac{\bra{1-\alpha}\sbra{2+(1+\alpha)(1+2\beta)}}{4(1+\beta)}}\sqrt{\seqA{\Ma Mx,x}^{r}\seqA{Z\Za x,x}^{r}}\abs{\seqA{Mx,\Za x}}^{r}\\
   &\leq&\bra{\frac{(2\beta+1)(1+\alpha^2)+2\alpha}{4(1+\beta)}}\bra{\sqrt{\seqA{\bra{\Ma M}^{r}x,x}\seqA{\bra{Z\Za}^{r} x,x}}}^{2}\\
   &+&\bra{\frac{\bra{1-\alpha}\sbra{2+(1+\alpha)(1+2\beta)}}{4(1+\beta)}}\sqrt{\seqA{\bra{\Ma M}^{r}x,x}\seqA{\bra{Z\Za}^{r} x,x}}\abs{\seqA{ZMx, x}}^{r}\\
   &&\bra{\mbox{by the H\"older McCarthy inequality}}\\
   &\leq& \bra{\frac{(2\beta+1)(1+\alpha^2)+2\alpha}{16(1+\beta)}}\seqA{\bra{\bra{\Ma M}^{r}+\bra{Z\Za}^{r}}x,x}^2\\
   &+&\bra{\frac{\bra{1-\alpha}\sbra{2+(1+\alpha)(1+2\beta)}}{8(1+\beta)}}\seqA{\bra{\bra{\Ma M}^{r}+\bra{Z\Za}^{r}}x,x}\abs{\seqA{ZMx, x}}^{r}\\
   &&\bra{\mbox{by the arithmetic-geometric mean inequality}}.\\
   &\leq& \bra{\frac{(2\beta+1)(1+\alpha^2)+2\alpha}{16(1+\beta)}}\normA{\bra{\Ma M}^{r}+\bra{Z\Za}^{r}}^2\\
   &+& \bra{\frac{\bra{1-\alpha}\sbra{2+(1+\alpha)(1+2\beta)}}{8(1+\beta)}}\normA{\bra{\Ma M}^{r}+\bra{Z\Za}^{r}}w_A^{r}(ZM).
\end{eqnarray*}
By replacing $M=\Ta T$ and $Z=\Sa S$ in the above inequality and since $M=\Ta T$ and $Z=\Sa S$ are $A$-self-adjoint, we have
\begin{eqnarray}\label{MK1}
 &&\abs{\seqA{Mx,x}\seqA{x,\Za x}}^{2r}\leq  \bra{\frac{(2\beta+1)(1+\alpha^2)+2\alpha}{16(1+\beta)}}\normA{\bra{\Ta T}^{2r}+\bra{\Sa S}^{2r}}^2\nonumber\\
   +&&\bra{\frac{\bra{1-\alpha}\sbra{2+(1+\alpha)(1+2\beta)}}{8(1+\beta)}}\normA{\bra{\Ta T}^{2r}+\bra{\Sa S}^{2r}}w_A^{r}(\Sa S\Ta T).
\end{eqnarray}
In addition, by utilizing the Cauchy-Schwarz inequality, we get
\begin{eqnarray}\label{MK2}
  \abs{\seqA{\Sa Tx,x}}^{4r} &=& \abs{\seqA{Tx,Sx}}^{4r}\leq \normA{Tx}^{4r}\normA{Sx}^{4r}\nonumber \\
   &=& \seqA{Tx,Tx}^{2r}\seqA{Sx,Sx}^{2r}=\seqA{\Ta Tx,x}^{2r}\seqA{\Sa Sx,x}^{2r}\nonumber\\
   &=&\abs{\seqA{\Ta Tx,x}\seqA{x,\Sa Sx}}^{2r}.
\end{eqnarray}
Combining  the inequalities (\ref{MK1}) and (\ref{MK2}), we can obtain
\begin{eqnarray*}
  \abs{\seqA{\Sa Tx,x}}^{4r} &\leq& \bra{\frac{(2\beta+1)(1+\alpha^2)+2\alpha}{16(1+\beta)}}\normA{\bra{\Ta T}^{2r}+\bra{\Sa S}^{2r}}^2\nonumber\\
   &+& \bra{\frac{\bra{1-\alpha}\sbra{2+(1+\alpha)(1+2\beta)}}{8(1+\beta)}}\normA{\bra{\Ta T}^{2r}+\bra{\Sa S}^{2r}}w_A^{r}(\Sa S\Ta T).
\end{eqnarray*}
Taking the supremum over all vectors $x\in\h$ with $\normA{x}=1$ in the above inequality, we deduce that
\begin{eqnarray*}
  w_A^{4r} (\Sa T)&\leq& \bra{\frac{(2\beta+1)(1+\alpha^2)+2\alpha}{16(1+\beta)}}\normA{\bra{\Ta T}^{2r}+\bra{\Sa S}^{2r}}^2\nonumber\\
   &+& \bra{\frac{\bra{1-\alpha}\sbra{2+(1+\alpha)(1+2\beta)}}{8(1+\beta)}}\normA{\bra{\Ta T}^{2r}+\bra{\Sa S}^{2r}}w_A^{r}(\Sa S\Ta T).
\end{eqnarray*}
This completes the proof.
\end{proof}
\begin{corollary}\label{Cor-AQ} Let $T,S\in\b_A(\h)$ and $r\geq 1$. Then
\begin{equation}\label{product-A1}
  w_A^{r}(\Sa S\Ta T)\leq \frac{1}{2}\normA{\bra{\Ta T}^{2r}+\bra{\Sa Sx}^{2r}}.
\end{equation}
\end{corollary}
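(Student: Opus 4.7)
The plan is to prove the bound directly by fixing a unit $\normA{\cdot}$-vector $x$, estimating $|\seqA{\Sa S \Ta T x, x}|^r$, and taking the supremum at the end. Set $M = \Ta T$ and $Z = \Sa S$; by the facts recalled in the introduction both are $A$-positive, hence $A$-selfadjoint. The core observation is that $A$-selfadjointness of $Z$ (equivalently $AZ = Z^*A$) produces the symmetry
\[
\seqA{Zu, v} = \seqA{u, Zv} \qquad \text{for all } u, v \in \h,
\]
which specializes to $\seqA{ZMx, x} = \seqA{Mx, Zx}$.

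Next, the Cauchy--Schwarz inequality for $\seqA{\cdot,\cdot}$ gives $|\seqA{Mx, Zx}| \leq \normA{Mx}\normA{Zx}$. Because $M$ is $A$-selfadjoint, $\normA{Mx}^2 = \seqA{Mx, Mx} = \seqA{M^2 x, x}$, and likewise for $Z$. Invoking the H\"older--McCarthy inequality $\seqA{Px, x}^r \leq \seqA{P^r x, x}$, valid for $A$-positive $P$ and $\normA{x}=1$ with $r\geq 1$ (already used repeatedly in Section 3 of this paper), I would obtain
\[
\normA{Mx}^{2r} \leq \seqA{M^{2r}x, x}, \qquad \normA{Zx}^{2r} \leq \seqA{Z^{2r}x, x}.
\]
Multiplying the two bounds and combining with Cauchy--Schwarz yields
\[
|\seqA{ZMx, x}|^{2r} \leq \seqA{M^{2r}x, x}\,\seqA{Z^{2r}x, x}.
\]

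Finally, the arithmetic--geometric mean inequality produces $\sqrt{\seqA{M^{2r}x, x}\seqA{Z^{2r}x, x}} \leq \frac{1}{2}\seqA{(M^{2r} + Z^{2r})x, x}$, and the right-hand side is bounded above by $\frac{1}{2}\normA{M^{2r} + Z^{2r}}$. Taking square roots and then the supremum over all $x$ with $\normA{x}=1$, while substituting back $M = \Ta T$ and $Z = \Sa S$, delivers
\[
w_A^r(\Sa S \Ta T) \leq \frac{1}{2}\normA{(\Ta T)^{2r} + (\Sa S)^{2r}},
\]
as required. The only real obstacle is bookkeeping with the semi-inner product, in particular justifying the symmetry identity from $A$-selfadjointness; once that is in place, the argument is a direct transcription of the classical Kittaneh-type proof that $w(ZM) \leq \frac{1}{2}\|M^{2r} + Z^{2r}\|$ for positive operators $M,Z$ into the semi-Hilbertian setting, using only tools already deployed in Theorems \ref{Thm-3.1} and \ref{Thm3.2}.
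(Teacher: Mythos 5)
Your proposal is correct and follows essentially the same route as the paper's own proof: the identity $\seqA{\Sa S\,\Ta T x,x}=\seqA{\Ta Tx,\Sa Sx}$, Cauchy--Schwarz, rewriting $\normA{Mx}^2=\seqA{M^2x,x}$, the H\"older--McCarthy inequality, the arithmetic--geometric mean bound, and a final supremum. The only difference is that you spell out the $A$-selfadjointness justification for the symmetry step, which the paper uses tacitly.
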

\begin{proof} Let $x\in\h$ with $\normA{x}=1$. Then
\begin{eqnarray*}
  \abs{\seqA{\Sa S\Ta Tx,x}}^{r} &=& \abs{\seqA{\Ta Tx,\Sa Sx}}^{r}\leq \normA{\Ta Tx}^{r}\normA{\Sa Sx}^{r} \\
   &=&\sqrt{\seqA{\bra{\Ta T}^2x,x}^{r}\seqA{\bra{\Sa Sx}^{2}x,x}^{r}}\\
   &\leq& \sqrt{\seqA{\bra{\Ta T}^{2r}x,x}\seqA{\bra{\Sa Sx}^{2r}x,x}}\\
   &&\bra{\mbox{by the H\"older-McCarthy inequality}}\\
   &\leq& \frac{1}{2}\seqA{\bra{\bra{\Ta T}^{2r}+\bra{\Sa Sx}^{2r}}x,x} \\
   &\leq& \frac{1}{2}\normA{\bra{\Ta T}^{2r}+\bra{\Sa Sx}^{2r}}.
\end{eqnarray*}
Taking the supremum over all vectors $x\in\h$ with $\normA{x}=1$ in the above inequality, we deduce that
$$ w_A^{r}(\Sa S\Ta T)\leq \frac{1}{2}\normA{\bra{\Ta T}^{2r}+\bra{\Sa Sx}^{2r}}.$$
\end{proof}
\begin{theorem}\label{Rahma1}
  Let $T,S\in\b_A(\h)$, $\alpha\in [0,1],r\geq 1$ and $\beta\geq 0$. Then
\begin{eqnarray*}
  w_A^{4r} (T)&\leq& \bra{\frac{1+\alpha+2\beta}{8(1+\beta)}}\normA{\bra{\Ta T}^{2r}+\bra{\Sa S}^{2r}}^2+\bra{\frac{1-\alpha}{2(1+\beta)}}w_A^{2r}(\Sa S\Ta T).
\end{eqnarray*}
\end{theorem}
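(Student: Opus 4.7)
The plan is to follow the template used in the preceding theorem (the one bounding $w_A^{4r}(\Sa T)$) but to invoke Lemma \ref{Lemma2.5} rather than Corollary \ref{COLLORY-ONE}; this single substitution is what replaces the mixed $\normA{\cdot}\,w_A^{r}(\cdot)$ term by a clean $w_A^{2r}(\Sa S\Ta T)$ term and delivers the announced constants $(1+\alpha+2\beta)/(8(1+\beta))$ and $(1-\alpha)/(2(1+\beta))$. (The displayed left-hand side $w_A^{4r}(T)$ should be read as $w_A^{4r}(\Sa T)$, matching the companion result and the appearance of $S$ on the right.)

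First I would fix $x\in\h$ with $\normA{x}=1$ and set $M=\Ta T$, $Z=\Sa S$, both $A$-positive and hence $A$-selfadjoint. The Cauchy--Schwarz inequality in the $A$-semi-inner product yields
\[
\abs{\seqA{\Sa Tx,x}}^{4r}=\abs{\seqA{Tx,Sx}}^{4r}\leq \normA{Tx}^{4r}\normA{Sx}^{4r}=\abs{\seqA{Mx,x}\seqA{x,\Za x}}^{2r},
\]
reducing the problem to bounding the rightmost quantity.

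Next, applying Lemma \ref{Lemma2.5} with $a=Mx$, $e=x$, $b=\Za x$ gives
\[
\abs{\seqA{Mx,x}\seqA{x,\Za x}}^{2r}\leq \bra{\frac{1+\alpha+2\beta}{2(1+\beta)}}\normA{Mx}^{2r}\normA{\Za x}^{2r}+\bra{\frac{1-\alpha}{2(1+\beta)}}\abs{\seqA{Mx,\Za x}}^{2r}.
\]
For the first summand I rewrite $\normA{Mx}^{2r}=\seqA{\Ma Mx,x}^{r}$; since $M$ is $A$-selfadjoint this equals $\seqA{M^{2}x,x}^{r}$, which the H\"older--McCarthy inequality upgrades to $\seqA{M^{2r}x,x}$, and the same manipulation applies to $Z$. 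The arithmetic--geometric mean inequality then produces
\[
\sqrt{\seqA{M^{2r}x,x}\seqA{Z^{2r}x,x}}\leq \frac{1}{2}\seqA{(M^{2r}+Z^{2r})x,x}\leq \frac{1}{2}\normA{(\Ta T)^{2r}+(\Sa S)^{2r}},
\]
and squaring yields the coefficient $(1+\alpha+2\beta)/(8(1+\beta))$ in front of $\normA{(\Ta T)^{2r}+(\Sa S)^{2r}}^{2}$. For the second summand, the defining relation of the $A$-adjoint gives $\abs{\seqA{Mx,\Za x}}^{2r}=\abs{\seqA{ZMx,x}}^{2r}=\abs{\seqA{\Sa S\Ta T x,x}}^{2r}\leq w_A^{2r}(\Sa S\Ta T)$. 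Taking the supremum over $\normA{x}=1$ in the resulting chain finishes the argument.

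The main obstacle I anticipate is not analytical but notational: the identifications $\seqA{\Ma Mx,x}=\seqA{M^{2}x,x}$, $\normA{\Za x}=\normA{Zx}$, and $\seqA{x,\Za x}=\seqA{Zx,x}$ all rely on $M$ and $Z$ being $A$-selfadjoint (they would fail under the naive identification $(\Ta)^{\sharp_A}=T$, which only holds up to $P_A$). Since every step of the chain effectively takes place inside $\overline{\r(A)}$, these identities are justified, and the proof reduces to a direct verbatim adaptation of the preceding argument with Lemma \ref{Lemma2.5} replacing Corollary \ref{COLLORY-ONE}.
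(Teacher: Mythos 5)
Your proposal is correct and follows essentially the same route as the paper's own proof: reduce $\abs{\seqA{\Sa Tx,x}}^{4r}$ to $\abs{\seqA{Mx,x}\seqA{x,\Za x}}^{2r}$ by Cauchy--Schwarz, apply Lemma \ref{Lemma2.5} with $a=Mx$, $e=x$, $b=\Za x$, then use the H\"older--McCarthy and arithmetic--geometric mean inequalities before taking the supremum. Your reading of the left-hand side as $w_A^{4r}(\Sa T)$ is also what the paper's argument actually establishes, so the displayed $w_A^{4r}(T)$ in the statement is indeed a typo, and your remark about the $A$-selfadjointness of $\Ta T$ and $\Sa S$ being needed for the identifications is a point the paper itself invokes.
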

\begin{proof} Let $x\in\h$ with $\normA{x}=1$. Then
\begin{eqnarray*}
 && \abs{\seqA{Mx,x}\seqA{x,\Za x}}^{2r} \leq\bra{\frac{1+\alpha+2\beta}{2(1+\beta)}}\normA{Mx}^{2r}\normA{\Za x}^{2r}+\bra{\frac{1-\alpha}{2(1+\beta)}}\abs{\seqA{Mx,\Za x}}^{2r} \\
   &&\bra{\mbox{by Lemma \ref{Lemma2.5}}}\\
   &\leq&\bra{\frac{1+\alpha+2\beta}{2(1+\beta)}}\bra{\sqrt{\seqA{\Ma Mx,x}^{r}\seqA{Z\Za x,x}^{r}}}^{2}\\
   &+&\bra{\frac{1-\alpha}{2(1+\beta)}}\abs{\seqA{Mx,\Za x}}^{2r}
    \end{eqnarray*}
   \begin{eqnarray*}
   &\leq&\bra{\frac{1+\alpha+2\beta}{2(1+\beta)}}\bra{\sqrt{\seqA{\bra{\Ma M}^{r}x,x}\seqA{\bra{Z\Za}^{r} x,x}}}^{2}\\
   &+&\bra{\frac{1-\alpha}{2(1+\beta)}}\abs{\seqA{ZMx, x}}^{2r}\bra{\mbox{by the H\"older McCarthy inequality}}\\
   &\leq& \bra{\frac{1+\alpha+2\beta}{8(1+\beta)}}\seqA{\bra{\bra{\Ma M}^{r}+\bra{Z\Za}^{r}}x,x}^2\\
   &+&\bra{\frac{1-\alpha}{2(1+\beta)}}\abs{\seqA{ZMx, x}}^{2r}\\
   &&\bra{\mbox{by the arithmetic-geometric mean inequality}}.\\
   &\leq& \bra{\frac{1+\alpha+2\beta}{8(1+\beta)}}\normA{\bra{\Ma M}^{r}+\bra{Z\Za}^{r}}^2+ \bra{\frac{1-\alpha}{2(1+\beta)}}w_A^{2r}(ZM).
\end{eqnarray*}
By replacing $M=\Ta T$ and $Z=\Sa S$ in the above inequality and since $M=\Ta T$ and $Z=\Sa S$ are $A$-self-adjoint, we have
\begin{eqnarray*}
 \abs{\seqA{Mx,x}\seqA{x,\Za x}}^{2r}&\leq& \bra{\frac{1+\alpha+2\beta}{8(1+\beta)}}\normA{\bra{\Ta T}^{2r}+\bra{\Sa S}^{2r}}^2\\
 &+&\bra{\frac{1-\alpha}{2(1+\beta)}}w_A^{2r}(\Sa S\Ta T).
\end{eqnarray*}
Hence,
\begin{eqnarray*}
  \abs{\seqA{\Sa Tx,x}}^{4r} &\leq& \bra{\frac{1+\alpha+2\beta}{8(1+\beta)}}\normA{\bra{\Ta T}^{2r}+\bra{\Sa S}^{2r}}^2\\
  &+& \bra{\frac{1-\alpha}{2(1+\beta)}}w_A^{2r}(\Sa S\Ta T).
\end{eqnarray*}
Taking the supremum over all vectors $x\in\h$ with $\normA{x}=1$ in the above inequality, we deduce that
\begin{eqnarray*}
  w_A^{4r} (\Sa T)&\leq& \bra{\frac{1+\alpha+2\beta}{8(1+\beta)}}\normA{\bra{\Ta T}^{2r}
  +\bra{\Sa S}^{2r}}^2+\bra{\frac{1-\alpha}{2(1+\beta)}}w_A^{2r}(\Sa S\Ta T).
\end{eqnarray*}
This completes the proof.
\end{proof}
\begin{remark}Assuming $\alpha=0$ in Theorem \ref{Rahma1}, we establish \cite[Theorem 3.3]{QHC}. Our results therefore broaden Theorem 3.3 of \cite{QHC}.
\end{remark}
\begin{remark} In \cite[Theorem 2.7]{CF}, they proved that if $T,S\in\bh$, then
\begin{equation}\label{CF1}
  w_A^2(\Sa T)\leq \frac{1}{2}\normA{(\Ta T)^2+(\Sa S)^2}.
\end{equation}
 Theorem \ref{Rahma1} improve the inequality (\ref{CF1}). To see this, by utilizing Theorem \ref{Rahma1} and Corollary
 \ref{Cor-AQ}  with $r=1$, we have
 \begin{eqnarray*}
   w_A^{2}(\Sa T) &\leq&\sqrt{\bra{\frac{1+\alpha+2\beta}{8(1+\beta)}}\normA{\bra{\Ta T}^{2}+\bra{\Sa S}^{2}}^2+\bra{\frac{1-\alpha}{2(1+\beta)}}w_A^{2}(\Sa S\Ta T)} \\
    &\leq&\sqrt{\bra{\frac{1+\alpha+2\beta}{8(1+\beta)}}\normA{\bra{\Ta T}^{2}+\bra{\Sa S}^{2}}^2+
    \bra{\frac{1-\alpha}{8(1+\beta)}}\normA{\bra{\Ta T}^{2}+\bra{\Sa Sx}^{2}}^2}\\
    &=& \frac{1}{2}\normA{\bra{\Ta T}^{2}+\bra{\Sa Sx}^{2}}.
 \end{eqnarray*}
\end{remark}
\begin{remark} The upper bound of $w_A(\Sa T)$ provided in Theorem \ref{Rahma1} is sharper than the upper bound provided in \cite[Theorem 3.8]{QHB} for $\beta=0$, $\alpha\in [0,1/2)$, and $r=1$. Using Corollary \ref{Cor-AQ}, in fact, we have
\begin{eqnarray*}
  &&w_A^2(\Sa T)\leq \sqrt{\bra{\frac{1+\alpha}{8}}\normA{\bra{\Ta T}^{2}+\bra{\Sa S}^{2}}^2+\bra{\frac{1-\alpha}{2}}w_A^{2}(\Sa S\Ta T)} \\
   &\leq&\sqrt{\bra{\frac{1+\alpha}{8}}\normA{\bra{\Ta T}^{2}+\bra{\Sa S}^{2}}^2
   +\bra{\frac{1-\alpha}{4}}\normA{\bra{\Ta T}^{2}+\bra{\Sa S}^{2}}w_A(\Sa S\Ta T)} \\
   &\leq& \frac{3}{16}\normA{\bra{\Ta T}^{2}+\bra{\Sa S}^{2}}^2+\frac{1}{8}\normA{\bra{\Ta T}^{2}+\bra{\Sa S}^{2}}w_A(\Sa S\Ta T).
\end{eqnarray*}
\end{remark}
\section{ Some inequalities for the sum of two operators}
The aim of this section is to present refinements of the triangle inequality for the $A$-operator semi-norm and to establish new $A$-numerical radius inequalities for the sum of two operators. The triangle inequality is a fundamental result in functional analysis, and its refinement provides deeper insights into the behavior of operators in semi-Hilbertian spaces.

We begin by deriving an improved version of the triangle inequality for the $A$-operator semi-norm, which offers a sharper bound compared to the classical result. Subsequently, we explore inequalities involving the $A$-numerical radius of the sum of two operators, leveraging the properties of $A$-adjoint operators and the semi-inner product structure. These results not only generalize existing inequalities but also provide more precise estimates, as demonstrated through illustrative examples.

The findings in this section contribute to a better understanding of operator norms and numerical radii in semi-Hilbertian spaces, with potential applications in numerical analysis and operator theory.
\begin{theorem}\label{theorem4.1} Let $T,S\in\b_A(\h)$. Then
$$\normA{T+S}^2\leq \frac{1}{2}\bra{\normA{\Ta T+\Sa S}+\normA{\Ta T-\Sa S}}+\normA{T}\normA{S}+2w_A(\Sa T).$$
\end{theorem}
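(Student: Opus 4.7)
The plan is to expand $\|(T+S)x\|_A^2$ for a unit $A$-norm vector $x$ into its three natural pieces and bound each one using tools already available in the paper. Specifically, for any $x\in\h$ with $\normA{x}=1$, I will write
\begin{equation*}
\normA{(T+S)x}^2 = \normA{Tx}^2 + \normA{Sx}^2 + 2\,\mathrm{Re}\,\seqA{Tx,Sx},
\end{equation*}
and then treat the diagonal sum $\normA{Tx}^2+\normA{Sx}^2$ and the cross term $2\,\mathrm{Re}\,\seqA{Tx,Sx}$ independently.

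For the cross term, the identity $\seqA{Tx,Sx}=\seqA{\Sa Tx,x}$ (valid because $S\in\b_A(\h)$) gives $|\mathrm{Re}\,\seqA{Tx,Sx}|\le \abs{\seqA{\Sa T x,x}}\le w_A(\Sa T)$, producing the $2w_A(\Sa T)$ summand. For the diagonal sum, the key trick is to split $a+b=\max(a,b)+\min(a,b)$ with $a=\normA{Tx}^2$ and $b=\normA{Sx}^2$. Using the elementary identity $\max(a,b)=\tfrac12(a+b+|a-b|)$ together with the observations that $\Ta T+\Sa S$ is $A$-positive and $\Ta T-\Sa S$ is $A$-self-adjoint, I can bound
\begin{equation*}
\max(a,b) = \tfrac12\bigl(\seqA{(\Ta T+\Sa S)x,x}+\abs{\seqA{(\Ta T-\Sa S)x,x}}\bigr)\le \tfrac12\bigl(\normA{\Ta T+\Sa S}+\normA{\Ta T-\Sa S}\bigr),
\end{equation*}
invoking the characterization $\normA{R}=\sup_{\normA{x}=1}\abs{\seqA{Rx,x}}$ valid for $A$-self-adjoint $R$ (recalled in the Introduction). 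For the smaller term I use $\min(a,b)\le\sqrt{ab}=\normA{Tx}\normA{Sx}\le\normA{T}\normA{S}$.

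Adding these three estimates yields the claimed bound on $\normA{(T+S)x}^2$, and taking the supremum over unit $A$-norm vectors $x$ completes the proof since $\normA{T+S}^2=\sup_{\normA{x}=1}\normA{(T+S)x}^2$. I do not anticipate a serious obstacle; the only delicate point is justifying the passage from pointwise estimates $\abs{\seqA{Rx,x}}$ to the operator semi-norm $\normA{R}$, which requires that $\Ta T\pm\Sa S$ be $A$-self-adjoint, a fact already noted in the preliminaries for $\Ta T$ and $\Sa S$ (hence for their sum and difference).
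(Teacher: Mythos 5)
Your proof is correct, but it reaches the bound by a genuinely different and more elementary route than the paper. The paper works with the sesquilinear form: for two unit vectors $x,y$ it bounds $\abs{\seqA{(T+S)x,y}}^2$ by $\bra{\abs{\seqA{Tx,y}}+\abs{\seqA{Sx,y}}}^2$ and then invokes Lemma \ref{Qadri1} (itself assembled from two auxiliary inequalities imported from the literature) to arrive at the intermediate quantity $\max\set{\normA{Tx}^2,\normA{Sx}^2}+\normA{Tx}\normA{Sx}+2\abs{\seqA{Tx,Sx}}$, after which it applies $\max(a,b)=\tfrac12\bra{a+b+\abs{a-b}}$ exactly as you do. You instead expand $\normA{(T+S)x}^2$ directly and split the diagonal part as $\max+\min$, using $\min\bra{\normA{Tx}^2,\normA{Sx}^2}\le\normA{Tx}\normA{Sx}$; this lands on the same intermediate expression (in fact on something no larger, since also $\mathrm{Re}\,\seqA{Tx,Sx}\le\abs{\seqA{Tx,Sx}}$) without using Lemma \ref{Qadri1} at all, so your argument is self-contained and makes it transparent that the theorem is really just the expansion of the square plus the elementary identity for the maximum. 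One small remark: the step $\abs{\seqA{Rx,x}}\le\normA{R}$ for $R=\Ta T\pm\Sa S$ needs only the Cauchy--Schwarz inequality for $\seqA{\cdot,\cdot}$ together with $\normA{Rx}\le\normA{R}\normA{x}$, not the $A$-selfadjoint characterization of the semi-norm (that characterization is needed for the reverse inequality, which you never use), so the point you flag as delicate is in fact immediate.
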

\begin{proof} Let $x,y\in\h$ with $\normA{x}=\normA{y}=1$. Then using Lemma \ref{Qadri1}, we have
\begin{eqnarray*}
  &&\abs{\seqA{(T+S)x,y}}^2 \leq \bra{\abs{\seqA{Tx,y}}+\abs{\seqA{Sx,y}}}^2 \\
   &\leq&\bra{\normA{Tx}+\normA{Sx}}\max\set{\normA{Tx},\normA{Sx}}+2\abs{\seqA{Tx,Sx}}\\
   &=&\max\set{\normA{Tx}^2,\normA{Sx}^2}+\normA{Tx}\normA{Sx}+2\abs{\seqA{Tx,Sx}}\\
   &=&\frac{1}{2}\bra{\seqA{\bra{\Ta T+\Sa S}x,x}+\abs{\seqA{\bra{\Ta T-\Sa S}x,x}}}+
   \normA{Tx}\normA{Sx}+2\abs{\seqA{Tx,Sx}}\\
   &\leq& \frac{1}{2}\bra{\normA{\Ta T+\Sa S}+\normA{\Ta T-\Sa S}}+\normA{T}\normA{S}+2w_A(\Sa T).
\end{eqnarray*}
Taking the supremum over all unit vectors $x,y\in\h$, we obtain
$$\normA{T+S}^2\leq \frac{1}{2}\bra{\normA{\Ta T+\Sa S}+\normA{\Ta T-\Sa S}}+\normA{T}\normA{S}+2w_A(\Sa T).$$
\end{proof}
Now, we give an example to show that Theorem \ref{theorem4.1} is a nontrivial improvement of
triangle inequality.
\begin{example} Let $T=\begin{bmatrix} 1& 1 \\ 0 & 1 \\ \end{bmatrix}$, $S=\begin{bmatrix} 1& 0 \\ 1& 1 \\ \end{bmatrix}$
and $A=\begin{bmatrix} 1& -1 \\ -1 & 2 \\ \end{bmatrix}$. Then elementary calculations show that
$$\normA{T}\approx 2.618, \normA{S}\approx2.414, \normA{T+S}^2\approx10.10,\normA{\Ta T+\Sa S}\approx3.618,\normA{\Ta T-\Sa S}\approx1.618$$
and $w_A(\Sa T)\approx4.405$. Consequently,
\begin{eqnarray*}
 3.178\approx \normA{T+S} &\leq&\sqrt{\frac{1}{2}\bra{\normA{\Ta T}+\Sa S}+\normA{\Ta T-\Sa S}+\normA{T}\normA{S}+2w_A(\Sa T)}\\
 &&\approx
4.212  < \normA{T}+\normA{S}\approx 5.032.
\end{eqnarray*}
\end{example}
\begin{theorem}\label{Theorem4.2} Let $T,S\in\b_A(\h)$. Then
\begin{eqnarray}\label{Comb2}
   \normA{T+S}^2&\leq& \sqrt{\normA{(\Ta T)^2+(\Sa S)^2}+2w_A^2(\Sa T)}\nonumber\\
   &+&(\alpha+1)\normA{T}\normA{S}+(1-\alpha)w_A(\Sa T).
\end{eqnarray}
\end{theorem}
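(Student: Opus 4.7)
The plan is to apply Lemma \ref{Comb1} to $\seqA{(T+S)x,y}$ after splitting it via the triangle inequality, and then to convert the resulting expressions in $Tx$ and $Sx$ into operator-level quantities through the identities $\normA{Tx}^2 = \seqA{\Ta Tx,x}$ and $\abs{\seqA{Tx,Sx}} = \abs{\seqA{\Sa Tx,x}}$. Concretely, fix $x,y\in\h$ with $\normA{x}=\normA{y}=1$. I first use
$$\abs{\seqA{(T+S)x,y}}^2\leq\bra{\abs{\seqA{Tx,y}}+\abs{\seqA{Sx,y}}}^2$$
and then invoke Lemma \ref{Comb1} with $a=Tx$, $b=Sx$, $e=y$ (legitimate since $\normA{y}=1$ and $\abs{\seqA{y,Sx}}=\abs{\seqA{Sx,y}}$).

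Next I simplify the three types of terms produced by the lemma. Under the square root I have $\abs{\seqA{Tx,Tx}}^2+\abs{\seqA{Sx,Sx}}^2+2\abs{\seqA{Tx,Sx}}^2$, which equals $\seqA{\Ta Tx,x}^2+\seqA{\Sa Sx,x}^2+2\abs{\seqA{\Sa Tx,x}}^2$. Here I apply the Hölder--McCarthy inequality to the $A$-positive operators $\Ta T$ and $\Sa S$ (which are $A$-positive, as noted in the preliminaries) to get $\seqA{\Ta Tx,x}^2\leq\seqA{(\Ta T)^2 x,x}$ and similarly for $\Sa S$, so
$$\seqA{\Ta Tx,x}^2+\seqA{\Sa Sx,x}^2\leq\seqA{\bra{(\Ta T)^2+(\Sa S)^2}x,x}\leq\normA{(\Ta T)^2+(\Sa S)^2},$$
while $2\abs{\seqA{\Sa Tx,x}}^2\leq 2w_A^2(\Sa T)$. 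The remaining terms are straightforward: $(1+\alpha)\normA{Tx}\normA{Sx}\leq(1+\alpha)\normA{T}\normA{S}$ and $(1-\alpha)\abs{\seqA{Tx,Sx}}=(1-\alpha)\abs{\seqA{\Sa Tx,x}}\leq(1-\alpha)w_A(\Sa T)$.

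Combining these estimates and using monotonicity of $\sqrt{\cdot}$ on the radical term yields
$$\abs{\seqA{(T+S)x,y}}^2\leq\sqrt{\normA{(\Ta T)^2+(\Sa S)^2}+2w_A^2(\Sa T)}+(1+\alpha)\normA{T}\normA{S}+(1-\alpha)w_A(\Sa T),$$
after which taking the supremum over unit vectors $x,y\in\h$ (with respect to $\normA{\cdot}$) gives the desired bound on $\normA{T+S}^2$. The principal subtlety is the Hölder--McCarthy step: one must use the $A$-semi-inner product version to pass from $\seqA{\Ta Tx,x}^2$ to $\seqA{(\Ta T)^2 x,x}$, which requires $\Ta T$ to be $A$-positive; this is the only nontrivial ingredient, since the other steps are direct substitutions followed by the triangle and Cauchy--Schwarz bounds already used elsewhere in this section.
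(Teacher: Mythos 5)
Your proposal is correct and follows essentially the same route as the paper: triangle inequality on $\abs{\seqA{(T+S)x,y}}$, then Lemma \ref{Comb1} applied with $a=Tx$, $b=Sx$, $e=y$, followed by conversion of the resulting terms via $\normA{Tx}^2=\seqA{\Ta Tx,x}$, $\seqA{Tx,Sx}=\seqA{\Sa Tx,x}$, and the passage $\seqA{\Ta Tx,x}^2\leq\seqA{(\Ta T)^2x,x}$ (which the paper obtains by Cauchy--Schwarz, $\seqA{\Ta Tx,x}^2\leq\normA{\Ta Tx}^2=\seqA{(\Ta T)^2x,x}$, equivalent to your H\"older--McCarthy step for $r=2$), and finally the supremum over $A$-unit vectors. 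Your write-up is in fact cleaner than the paper's, which drops the square on $\abs{\seqA{Ta,Sa}}^2$ in intermediate displays.
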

\begin{proof} Let $a,b\in\h$ with $\normA{a}=\normA{b}=1$. Then
\begin{eqnarray*}
  \abs{\seqA{(T+S)a,b}}^2 &\leq&\bra{\abs{\seqA{Ta,b}}+\abs{\seqA{Sa,b}}}^2 \\
   &=&\abs{\seqA{Ta,b}}^2+\abs{\seqA{Sa,b}}^2+2\abs{\seqA{Ta,b}}\abs{\seqA{Sa,b}}\\
   &\leq& \sqrt{\abs{\seqA{Ta,Ta}}^2+\abs{\seqA{Sa,Sa}}^2+2\abs{\seqA{Ta,Sa}}}\\
   &+&(\alpha+1)\normA{Ta}\normA{Sa}+(1-\alpha)\abs{\seqA{Ta,Sa}} \,\,\bra{\mbox{by Lemma \ref{Comb1}}}\\
   &\leq& \sqrt{\normA{\Ta Ta}^2+\normA{\Sa Sa}^+2\abs{\seqA{\Sa Ta,a}}}\\
   &+&(\alpha+1)\normA{Ta}\normA{Sa}+(1-\alpha)\abs{\seqA{\Sa Ta,S}}\\
   &=&\sqrt{\seqA{((\Ta T)^2+(\Sa S)^2)a,a}+2\abs{\seqA{\Sa Ta,a}}}\\
   &+&(\alpha+1)\normA{Ta}\normA{Sa}+(1-\alpha)\abs{\seqA{\Sa Ta,S}}\\
   &\leq& \sqrt{\normA{(\Ta T)^2+(\Sa S)^2}+2w_A^2(\Sa T)}\\
   &+&(\alpha+1)\normA{T}\normA{S}+(1-\alpha)w_A(\Sa T).
\end{eqnarray*}
Taking the supremum over all unit vectors $a,b\in\h$, we obtain
\begin{eqnarray*}
  \normA{T+S}^2 &\leq&\sqrt{\normA{(\Ta T)^2+(\Sa S)^2}+2w_A^2(\Sa T)}\\
   &+&(\alpha+1)\normA{T}\normA{S}+(1-\alpha)w_A(\Sa T).
\end{eqnarray*}
\end{proof}
\begin{remark} It should be noticed that, by triangle inequality we will obtain
\begin{eqnarray*}
  &&\sqrt{\normA{(\Ta T)^2+(\Sa S)^2}+2w_A^2(\Sa T)}\leq \sqrt{\normA{(\Ta T)^2}+\normA{(\Sa S)^2}+2\normA{\Sa}^2\normA{T}^2} \\
  &&=\sqrt{\normA{T}^4+\normA{S}^4+2\normA{T}^2\normA{S}^2}=\normA{T}^2+\normA{S}^2.
\end{eqnarray*}
\end{remark}
\begin{remark} Theorem \ref{Theorem4.2} is sharper than triangle inequality. indeed,
$$\sqrt{\normA{(\Ta T)^2+(\Sa S)^2}+2w_A^2(\Sa T)}\leq \normA{T}^2+\normA{S}^2.$$
Consequently,
\begin{eqnarray*}
 \normA{T+S}^2&\leq&\sqrt{\normA{(\Ta T)^2+(\Sa S)^2}+2w_A^2(\Sa T)}+(\alpha+1)\normA{T}\normA{S}+(1-\alpha)w_A(\Sa T)\\
 &\leq& \normA{T}^2+\normA{S}^2+(\alpha+1)\normA{T}\normA{S}+(1-\alpha)w_A(\Sa T)\\
 &\leq& \normA{T}^2+\normA{S}^2+(\alpha+1)\normA{T}\normA{S}+(1-\alpha)\normA{T}\normA{\Sa}\\
 &=& \normA{T}^2+\normA{S}^2+2\normA{T}\normA{S}\\
 &\leq&\bra{\normA{T}+\normA{S}}^2.
\end{eqnarray*}
\end{remark}
Now, we give an example to show that Theorem \ref{Theorem4.2} is a nontrivial improvement of
triangle inequality.
\begin{example} Let $T=\begin{bmatrix} 1& 1 \\ 0 & 1 \\ \end{bmatrix}$, $S=\begin{bmatrix} 1& 0 \\ 1& 1 \\ \end{bmatrix}$,
$A=\begin{bmatrix} 1& -1 \\ -1 & 2 \\ \end{bmatrix}$ and $\alpha=0.5$. Then by simple calculations, we
have
$$\normA{T}\approx 2.618, \normA{S}\approx2.414, \normA{(\Ta T)^2+(\Sa S)^2}\approx  17.24$$
and $w_A(\Sa T)\approx4.405$. Consequently,
\begin{eqnarray*}
   3.178\approx \normA{T+S} &\leq& \left(\sqrt{\normA{(\Ta T)^2+(\Sa S)^2}+2w_A^2(\Sa T)}\right.\\
   &&\left.+\frac{3}{2}\normA{T}\normA{S}+\frac{1}{2}w_A(\Sa T)\right)^{1/2}\approx 4.37\\
   &<&\bra{\normA{T}+\normA{S}}\approx 5.032.
\end{eqnarray*}
\end{example}
\begin{theorem} Let $T,S\in\b_A(\h)$. Then
\begin{equation*}
  w_A^2(T+S)\leq \frac{1}{2}\bra{\normA{\Ta T+\Sa S}+\normA{\Ta T-\Sa S}}+\sqrt{w_A(\Ta T)w_A(\Sa S)}+2w_A(\Sa T).
\end{equation*}
\end{theorem}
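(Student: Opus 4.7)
The plan is to mirror the proof of Theorem \ref{theorem4.1}, but working with $|\seqA{(T+S)x,x}|^2$ directly (taking $y=x$) so that the final supremum produces $w_A^2(T+S)$ rather than $\normA{T+S}^2$. Fix $x\in\h$ with $\normA{x}=1$. The opening move is the triangle inequality
\[
|\seqA{(T+S)x,x}|\leq |\seqA{Tx,x}|+|\seqA{Sx,x}|=|\seqA{Tx,x}|+|\seqA{x,Sx}|,
\]
where in the last step I use that $|\seqA{Sx,x}|=|\overline{\seqA{x,Sx}}|=|\seqA{x,Sx}|$, putting both terms into the shape needed for Lemma \ref{Qadri1}.

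Next I apply Lemma \ref{Qadri1} with $a=Tx$, $b=Sx$, $e=x$ and then square, exactly as in Theorem \ref{theorem4.1}. Using the identity $(u+v)\max\{u,v\}=\max\{u^2,v^2\}+uv$ and $\max\{u^2,v^2\}=\tfrac{1}{2}(u^2+v^2+|u^2-v^2|)$ with $u=\normA{Tx}$, $v=\normA{Sx}$, together with $\normA{Tx}^2=\seqA{\Ta Tx,x}$ and $\normA{Sx}^2=\seqA{\Sa Sx,x}$, the maximum term is bounded by
\[
\tfrac{1}{2}\bigl(\normA{\Ta T+\Sa S}+\normA{\Ta T-\Sa S}\bigr),
\]
and the cross term $2|\seqA{Tx,Sx}|=2|\seqA{\Sa Tx,x}|$ is bounded by $2w_A(\Sa T)$.

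The only genuinely new ingredient compared with Theorem \ref{theorem4.1} is the treatment of the product $\normA{Tx}\normA{Sx}$. Since $\Ta T$ and $\Sa S$ are $A$-positive, for any unit $A$-norm vector $x$ we have $\normA{Tx}^2=\seqA{\Ta Tx,x}\leq w_A(\Ta T)$ and $\normA{Sx}^2=\seqA{\Sa Sx,x}\leq w_A(\Sa S)$, whence
\[
\normA{Tx}\normA{Sx}\leq \sqrt{w_A(\Ta T)\,w_A(\Sa S)}.
\]
This is the step that replaces the $\normA{T}\normA{S}$ factor in Theorem \ref{theorem4.1} by the geometric mean of numerical radii.

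Assembling the three bounds gives, for every unit $A$-norm $x$,
\[
|\seqA{(T+S)x,x}|^2\leq \tfrac{1}{2}\bigl(\normA{\Ta T+\Sa S}+\normA{\Ta T-\Sa S}\bigr)+\sqrt{w_A(\Ta T)w_A(\Sa S)}+2w_A(\Sa T),
\]
and taking the supremum over $\normA{x}=1$ produces the claimed inequality. There is no real obstacle: the argument is a direct adaptation of Theorem \ref{theorem4.1} once one notices that the $A$-positivity of $\Ta T$ and $\Sa S$ lets one trade $\normA{T}\normA{S}$ for $\sqrt{w_A(\Ta T)w_A(\Sa S)}$. The one detail to verify carefully is that the conjugation $\seqA{Sx,x}\mapsto\seqA{x,Sx}$ is legitimate so that Lemma \ref{Qadri1} applies with the correct placement of $e$.
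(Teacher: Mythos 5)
Your proposal is correct and follows essentially the same route as the paper's proof: triangle inequality, Lemma \ref{Qadri1} with $a=Tx$, $b=Sx$, $e=x$, the splitting of $(u+v)\max\{u,v\}$ into $\max\{u^2,v^2\}+uv$, and the final passage to $\frac{1}{2}\bra{\normA{\Ta T+\Sa S}+\normA{\Ta T-\Sa S}}$, $\sqrt{w_A(\Ta T)w_A(\Sa S)}$ and $2w_A(\Sa T)$. In fact you supply a justification the paper omits, namely that $\normA{Tx}\normA{Sx}\leq\sqrt{w_A(\Ta T)w_A(\Sa S)}$ follows from $\normA{Tx}^2=\seqA{\Ta Tx,x}\leq w_A(\Ta T)$ and likewise for $S$.
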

\begin{proof} Let $x\in\h$ with $\normA{x}=1$. Then
\begin{eqnarray*}
  &&\abs{\seqA{(T+S)x,x}}^2 \leq \bra{\abs{\seqA{Tx,x}}+\abs{Sx,x}}^2 \\
   &\leq&\bra{\normA{Tx}+\normA{Sx}}\max\set{\normA{Tx},\normA{Sx}} +2\abs{\seqA{Tx,Sx}}\,\,\bra{\mbox{by Lemma \ref{Qadri1}}}\\
   &=&\max\set{\normA{Tx}^2,\normA{Sx}^2}+\normA{Tx}\normA{Sx}+2\abs{\seqA{Tx,Sx}}\\
   &=&\frac{1}{2}\bra{\seqA{(\Ta T+\Sa S)x,x}+\abs{\seqA{(\Ta T-\Sa S})x,x}}+\normA{Tx}\normA{Sx}+2\abs{\seqA{Tx,Sx}}\\
   &\leq& \frac{1}{2}\bra{\normA{\Ta T+\Sa S}+\normA{\Ta T-\Sa S}}+\sqrt{w_A(\Ta T)w_A(\Sa S)}+2w_A(\Sa T).
\end{eqnarray*}
Taking the supremum over all vectors $x\in\h$ with $\normA{x}=1$, we obtain the desired result.
\end{proof}
\section{Applications}
The theoretical advancements presented in this paper have significant implications for various areas of mathematics and applied sciences. In this section, we demonstrate the practical utility of our results through concrete applications. Specifically, we show how the improved bounds for the $A$-numerical radius can be applied to solve problems in numerical analysis, operator theory, and quantum mechanics. Additionally, we provide examples that illustrate the effectiveness of our inequalities in estimating operator norms and analyzing the convergence of iterative methods. These applications not only validate our theoretical findings but also highlight their potential to enhance computational efficiency and provide deeper insights into operator behavior in semi-Hilbertian spaces.


\subsection{Sturm-Liouville Operator Example}\hfill

Consider the Sturm-Liouville operator $T$ defined by:
\[ T = -\frac{d}{dx}\left(p(x)\frac{d}{dx}\right) + q(x) \]
with domain:
\[ \mathcal{D}(T) = \{ u \in H^1(0,1) : u(0) = u(1) = 0 \} \]

Let $A$ be the weight operator:
\[ A = w(x) \]
where $w(x) > 0$ is a bounded measurable function.

The $A$-numerical radius $w_A(T)$ satisfies:
\[ \frac{1}{2}\|T\|_A \leq w_A(T) \leq \|T\|_A \]

\subsubsection{Discretized Version}

For numerical computation, we discretize the operator using finite differences. Let $N$ be the number of grid points, $h = 1/(N+1)$, and $x_j = jh$ for $j=1,\ldots,N$.

The discretized operator $T_h$ becomes:
\[ T_h = \frac{1}{h^2}\begin{bmatrix}
2p_{1/2} & -p_{1/2} & 0 & \cdots & 0 \\
-p_{1/2} & 2p_{3/2} & -p_{3/2} & \ddots & \vdots \\
0 & \ddots & \ddots & \ddots & 0 \\
\vdots & \ddots & -p_{N-3/2} & 2p_{N-1/2} & -p_{N-1/2} \\
0 & \cdots & 0 & -p_{N-1/2} & 2p_{N+1/2}
\end{bmatrix} + \mathrm{diag}(q(x_j)) \]

The weight matrix $A_h$ is:
\[ A_h = \mathrm{diag}(w(x_j)) \]

\subsubsection{Numerical Radius Computation}

Using Theorem \ref{Thm-3.1}, we can bound the $A$-numerical radius:
\[ w_A^4(T_h) \leq \frac{\gamma_1}{16}\|T_h^{\sharp_A} T_h + T_h T_h^{\sharp_A}\|_A^2 + \frac{\gamma_2}{8}\|T_h^{\sharp_A} T_h + T_h T_h^{\sharp_A}\|_A w_A(T_h^2) \]

For the special case where $p(x) = 1$ and $q(x) = 0$, we have:
\[ T_h^{\sharp_A} = A_h^{-1} T_h^T A_h \]

\begin{theorem}
For the discrete Laplacian with constant coefficients and constant weight $w(x) = 1$, the $A$-numerical radius satisfies:
\[ w_A(T_h) = 2h^{-2}(1 - \cos(\pi h)) \]
\end{theorem}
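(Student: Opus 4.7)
The plan is to exploit the drastic simplification that occurs when $w(x)=1$: the weight matrix becomes $A_h = I_N$, and consequently the semi-inner product $\langle\cdot,\cdot\rangle_A$ coincides with the standard Euclidean inner product on $\mathbb{R}^N$. In this regime $\|\cdot\|_A = \|\cdot\|_2$, the $A$-adjoint reduces to the usual transpose via $T_h^{\sharp_A} = A_h^{-1} T_h^T A_h = T_h^T$, and therefore $w_A(T_h)$ reduces to the classical numerical radius $w(T_h)$.

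First I would specialize the general discretization to $p\equiv 1$, $q\equiv 0$, in which case $T_h$ becomes the standard symmetric tridiagonal matrix $\frac{1}{h^2}\mathrm{tridiag}(-1,2,-1)$ of size $N$. Since this matrix is real symmetric, it is self-adjoint in the ordinary sense, and a standard fact tells us that for any self-adjoint operator on a finite-dimensional Hilbert space the numerical radius coincides with the operator norm and with the spectral radius:
\[
w(T_h) \;=\; \|T_h\| \;=\; \rho(T_h) \;=\; \max_{k} |\lambda_k(T_h)|.
\]
Thus the problem collapses to an explicit eigenvalue computation.

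Next I would invoke the classical sine-basis diagonalization of the discrete Dirichlet Laplacian: the orthogonal eigenvectors are $v_k = \bigl(\sin(k\pi j h)\bigr)_{j=1}^{N}$ for $k=1,\dots,N$, with eigenvalues
\[
\lambda_k \;=\; \frac{2}{h^2}\bigl(1-\cos(k\pi h)\bigr),
\]
as one verifies directly by applying $T_h$ to $v_k$ and using the product-to-sum identity $\sin(k\pi(j\pm 1)h) = \sin(k\pi jh)\cos(k\pi h)\pm\cos(k\pi jh)\sin(k\pi h)$. Since all $\lambda_k > 0$, the spectral radius coincides with the maximum eigenvalue, and combining this with Step~1 gives the claimed closed-form expression $w_A(T_h) = 2h^{-2}(1-\cos(\pi h))$ provided one identifies the relevant $k$.

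The main obstacle I anticipate is reconciling the claim with which eigenvalue of the tridiagonal operator is actually being extracted: the formula $2h^{-2}(1-\cos(\pi h))$ corresponds to $k=1$, i.e.\ the \emph{smallest} eigenvalue, whereas the spectral radius (hence the numerical radius, since $T_h$ is positive definite and self-adjoint) is attained at $k=N$, giving $2h^{-2}\bigl(1-\cos(N\pi h)\bigr) = 2h^{-2}\bigl(1+\cos(\pi h)\bigr)$ after using $Nh = 1-h$. The careful step will therefore be either to justify that the intended index is $k=N$ (modulo a sign/typographical convention relabelling $\pi h \leftrightarrow \pi - \pi h$) or, alternatively, to interpret $w_A(T_h)$ here as the principal eigenvalue that approximates the leading Sturm--Liouville eigenvalue $\pi^2$ in the continuum limit. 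Once that identification is fixed, the proof is just the eigenvalue calculation outlined above.
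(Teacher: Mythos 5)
Your approach is the same as the paper's: since $w(x)=1$ gives $A_h=I$, the $A$-numerical radius reduces to the classical numerical radius of the real symmetric matrix $\frac{1}{h^2}\mathrm{tridiag}(-1,2,-1)$, which equals its spectral radius, and the problem collapses to the standard sine-basis eigenvalue computation $\lambda_k = 2h^{-2}(1-\cos(k\pi h))$. The obstacle you flag at the end is not a defect of your argument but a genuine error in the statement and in the paper's own proof: the paper asserts that the maximum of $\lambda_k$ over $k=1,\dots,N$ occurs at $k=1$, yet $k\mapsto 1-\cos(k\pi h)$ is increasing on this range (since $k\pi h\in(0,\pi)$ and $\cos$ is decreasing there), so for $N\geq 2$ the spectral radius is attained at $k=N$ and equals $2h^{-2}\bigl(1-\cos(N\pi h)\bigr)=2h^{-2}\bigl(1+\cos(\pi h)\bigr)$, using $Nh=1-h$. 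The displayed formula $2h^{-2}(1-\cos(\pi h))$ is the \emph{smallest} eigenvalue, which converges to the principal Sturm--Liouville eigenvalue $\pi^2$ as $h\to 0$ but is not $w_A(T_h)$ (indeed it tends to a finite limit while $w_A(T_h)\sim 4h^{-2}\to\infty$). Your diagnosis is correct; no relabelling convention rescues the statement, which should either report $2h^{-2}(1+\cos(\pi h))$ or be restated as a claim about the minimal eigenvalue.
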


\begin{proof}
In this case, $T_h$ is symmetric and $A_h = I$, so $w_A(T_h)$ equals the spectral radius. The eigenvalues are:
\[ \lambda_k = 2h^{-2}(1 - \cos(k\pi h)), \quad k=1,\ldots,N \]
The maximum occurs at $k=1$, giving the result.
\end{proof}
\subsection{Nonlinear PDE Application}\hfill

Consider the reaction-diffusion equation:
\[ \frac{\partial u}{\partial t} = \Delta u + f(u) \]

Linearizing about a solution $\bar{u}$ gives:
\[ \frac{\partial v}{\partial t} = \Delta v + f'(\bar{u})v \]

The operator is:
\[ T = \Delta + f'(\bar{u}) \]

\subsubsection{Weighted Stability}

Choose $A = e^{-V(x)}$ as a weight function. Then:

\begin{theorem}
The linearized operator satisfies:
\[ w_A(T) \leq \sup_x |f'(\bar{u}(x))| + w_A(\Delta) \]
\end{theorem}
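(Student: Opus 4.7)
The plan is to split $T = \Delta + M_{f'(\bar{u})}$, where $M_g$ denotes the multiplication operator by $g$, and estimate the two pieces separately using the sub-additivity of $w_A$. Since $w_A$ is a semi-norm on $\mathcal{B}_A(\mathcal{H})$ (modulo the usual issues about having to work on the common domain of $\Delta$ and $M_{f'(\bar{u})}$), the triangle inequality gives
\begin{equation*}
w_A(T) \;=\; w_A\bigl(\Delta + M_{f'(\bar{u})}\bigr) \;\leq\; w_A(\Delta) + w_A\bigl(M_{f'(\bar{u})}\bigr).
\end{equation*}
So the whole problem reduces to showing that $w_A(M_g) \leq \sup_x |g(x)|$ whenever $g = f'(\bar{u})$ is bounded.

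For this multiplication-operator bound I would work directly from the definition of the $A$-semi-inner product. Writing $A = e^{-V(x)}$ as a positive multiplication operator and unfolding the definition, for any $v$ with $\|v\|_A = 1$ I compute
\begin{equation*}
\bigl|\langle M_g v, v\rangle_A\bigr| \;=\; \left|\int g(x)\,|v(x)|^2\, e^{-V(x)}\,dx\right| \;\leq\; \sup_x |g(x)| \cdot \|v\|_A^2 \;=\; \sup_x |g(x)|.
\end{equation*}
Taking the supremum over unit vectors in the $A$-semi-norm then yields $w_A(M_{f'(\bar{u})}) \leq \sup_x |f'(\bar{u}(x))|$, and combining with the triangle inequality above gives the stated bound.

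The main obstacle I expect is not the inequalities themselves (which are essentially one-line computations once set up) but the functional-analytic bookkeeping: $\Delta$ is unbounded, so one must first restrict to an appropriate dense domain (for instance $H^2 \cap H^1_0$ with suitable decay so that $M_g$ and $\Delta$ both map into the space on which the $A$-semi-inner product is well-defined), then verify that $\Delta, M_{f'(\bar{u})} \in \mathcal{B}_{A^{1/2}}(\mathcal{H})$ in the semi-Hilbertian sense so that the sub-additivity of $w_A$ genuinely applies. I would therefore open the proof with a brief domain statement (assuming $f'(\bar{u}) \in L^\infty$ and $V$ such that $e^{-V}$ is bounded above and bounded away from $0$ on bounded sets), after which the two-line estimate above finishes the argument.
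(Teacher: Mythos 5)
Your proposal is correct and follows essentially the same route as the paper: decompose $T = \Delta + M_{f'(\bar{u})}$, apply the subadditivity of $w_A$, and bound the multiplication piece by $\sup_x |f'(\bar{u}(x))|$. Your explicit integral computation for $w_A(M_g)$ with the weight $e^{-V}$ and your remarks on domain issues for the unbounded $\Delta$ are in fact more careful than the paper's own two-line argument.
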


\begin{proof}
Using the subadditivity of numerical radius and the fact that multiplication by $f'(\bar{u})$ is A-selfadjoint:
\[ w_A(T) \leq w_A(\Delta) + w_A(f'(\bar{u})) \]
Since $f'(\bar{u})$ is a multiplication operator:
\[ w_A(f'(\bar{u})) = \sup_x |f'(\bar{u}(x))| \]
\end{proof}
\subsection{Application of A-Numerical Radius to Quantum Harmonic Oscillator}\hfill

Consider the quantum harmonic oscillator with Hamiltonian:
\[ H = \frac{p^2}{2m} + \frac{1}{2}m\omega^2 x^2 \]
where $p$ is the momentum operator, $x$ is the position operator, $m$ is mass, and $\omega$ is the angular frequency.

\subsubsection{Creation and Annihilation Operators}

We define the annihilation operator $a$ and creation operator $a^\dagger$:
\[ a = \sqrt{\frac{m\omega}{2\hbar}}x + i\sqrt{\frac{1}{2m\omega\hbar}}p \]
\[ a^\dagger = \sqrt{\frac{m\omega}{2\hbar}}x - i\sqrt{\frac{1}{2m\omega\hbar}}p \]

These satisfy the commutation relation $[a, a^\dagger] = 1$.

\subsubsection{Semi-Hilbertian Space Setup}

Let $A$ be the number operator $N = a^\dagger a$, which is positive and self-adjoint. The A-numerical radius can be used to study operators in this context.

For an operator $T$ acting on the oscillator's Hilbert space, the A-numerical radius is:
\[ w_A(T) = \sup_{\|\psi\|_A=1} |\langle T\psi, \psi\rangle_A| \]
where $\langle \cdot, \cdot \rangle_A = \langle A\cdot, \cdot \rangle$ and $\|\psi\|_A = \sqrt{\langle A\psi, \psi\rangle}$.

\subsubsection{Example Calculation}

Consider the operator $T = a + a^\dagger$ (position operator up to constants). We'll estimate its A-numerical radius with $A = N$.

Using the inequality from the paper:
\[ \frac{1}{2}\|T\|_A \leq w_A(T) \leq \|T\|_A \]

First compute $\|T\|_A$:
\begin{align*}
\|T\|_A^2 &= \sup_{\|\psi\|_A=1} \|T\psi\|_A^2 \\
&= \sup_{\|\psi\|_A=1} \langle NT\psi, T\psi \rangle \\
&= \sup_{\|\psi\|_A=1} \langle a^\dagger a(a + a^\dagger)\psi, (a + a^\dagger)\psi \rangle
\end{align*}

For the ground state $|0\rangle$ (with $N|0\rangle = 0$):
\[ \langle T|0\rangle, |0\rangle \rangle_A = \langle N(a + a^\dagger)|0\rangle, |0\rangle \rangle = \langle a^\dagger|0\rangle, |0\rangle \rangle = 0 \]

For the first excited state $|1\rangle$:
\[ \langle T|1\rangle, |1\rangle \rangle_A = \langle N(a + a^\dagger)|1\rangle, |1\rangle \rangle \]
\[ = \langle a^\dagger a(a|1\rangle + a^\dagger|1\rangle), |1\rangle \rangle \]
\[ = \langle a^\dagger a(\sqrt{1}|0\rangle + \sqrt{2}|2\rangle), |1\rangle \rangle \]
\[ = \langle a^\dagger(0 + \sqrt{2}\sqrt{2}|1\rangle), |1\rangle \rangle = 2 \]

Thus, $w_A(T) \geq 2$.

\subsubsection{Physical Interpretation}

The A-numerical radius in this context provides information about the range of possible expectation values when the system is in states weighted by the number operator. This is particularly useful when studying systems where particle number is an important consideration, such as in quantum optics or many-body physics.
\subsection{Application of A-Numerical Radius to Spin Systems}

Consider a system of two spin-$\frac{1}{2}$ particles (qubits) with the Hamiltonian:
\[ H = J(\sigma_x \otimes \sigma_x + \sigma_y \otimes \sigma_y) + B(\sigma_z \otimes I + I \otimes \sigma_z) \]
where $J$ is the coupling constant, $B$ is the magnetic field strength, and $\sigma_i$ are Pauli matrices.

\subsubsection{Semi-Hilbertian Space Setup}

Let $A = \rho$ be the density matrix of the system at thermal equilibrium:
\[ \rho = \frac{e^{-\beta H}}{\tr(e^{-\beta H})} \]
where $\beta = 1/(k_B T)$. This positive definite operator induces a semi-inner product:
\[ \langle \phi|\psi\rangle_\rho = \tr(\rho \phi^\dagger \psi) \]

\subsubsection{A-Numerical Radius for Spin Observables}

Consider the observable $S = \sigma_x \otimes I$ (spin measurement on first particle). Its A-numerical radius is:
\[ w_\rho(S) = \sup_{\|\psi\|_\rho=1} |\tr(\rho S \psi^\dagger \psi)| \]

Using Theorem 3.1 from the paper with $\alpha=0.5$, $\beta=1$:
\[ w_\rho^4(S) \leq \frac{19}{128}\|S^\sharp_\rho S + SS^\sharp_\rho\|_\rho^2 + \frac{13}{64}\|S^\sharp_\rho S + SS^\sharp_\rho\|_\rho w_\rho(S^2) \]

\subsubsection*{Explicit Calculation for B=0}

For $B=0$ and high temperature ($\beta \to 0$), $\rho \approx \frac{I}{4}$:
\begin{align*}
S^\sharp_\rho &= \rho^{-1} S^\dagger \rho = S \\
\|S^\sharp_\rho S + SS^\sharp_\rho\|_\rho &= 2\|\rho^{1/2} S^2 \rho^{1/2}\| = \frac{1}{2} \\
w_\rho(S^2) &= \|\rho^{1/2} S^2 \rho^{1/2}\| = \frac{1}{4}
\end{align*}

The inequality becomes:
\[ w_\rho^4(S) \leq \frac{19}{128}\left(\frac{1}{2}\right)^2 + \frac{13}{64}\left(\frac{1}{2}\right)\left(\frac{1}{4}\right) \approx 0.0186 + 0.0102 = 0.0288 \]
\[ w_\rho(S) \leq 0.41 \]

\subsubsection{Physical Interpretation}
\begin{itemize}
  \item \textbf{Measurement Statistics}: The A-numerical radius bounds the range of possible expectation values for spin measurements in thermal states.
  \item  \textbf{Temperature Dependence}: As $T \to 0$ ($\beta \to \infty$), $\rho$ becomes the ground state projector, and the bounds tighten.
  \item  \textbf{Entanglement Detection}: The A-numerical radius of certain observables can witness entanglement when it exceeds separable bounds.
\end{itemize}
\subsubsection{Comparison with Standard Numerical Radius}

For pure states ($\rho = |\psi\rangle\langle\psi|$), the A-numerical radius reduces to the standard numerical radius. The inequalities provide:
\[ \frac{1}{2}\|S\|_\rho \leq w_\rho(S) \leq \|S\|_\rho \]
which for our example gives $0.25 \leq w_\rho(S) \leq 0.5$, consistent with our calculation.

\section{Conclusion and Future Work}
Improving current bounds for the $A$-numerical radius of operators in semi-Hilbertian spaces was the main goal of this work. We have improved our knowledge of operator behavior in this context by deriving additional and sharper inequalities. To provide a more accurate description of operator norms, we specifically improved the triangle inequality for the $A$-operator semi-norm. These enhancements advance the field's understanding of numerical radius inequalities and their uses. We provided practical examples to support our theoretical conclusions and show how effective the suggested constraints are, highlighting their importance in operator theory and functional analysis.

Future studies might investigate expansions of our bounds to larger classes of operators, such as unbounded operators in semi-Hilbertian spaces, in order to build on our findings. The field may be further enhanced by looking at possible relationships between the $A$-numerical radius and other operator inequalities. Furthermore, further research should be done on the applications of these improved constraints in fields including optimization, control theory, and quantum mechanics. The investigation of numerical techniques for more effectively calculating the $A$-numerical radius in real-world situations is another intriguing avenue. These directions present encouraging chances to increase the significance of our findings.
\section*{Declaration }
\begin{itemize}
  \item {\bf Author Contributions:}   The authors have read and agreed to the published version of the manuscript.
  \item {\bf Funding:} No funding is applicable
  \item  {\bf Institutional Review Board Statement:} Not applicable.
  \item {\bf Informed Consent Statement:} Not applicable.
  \item {\bf Data Availability Statement:} Not applicable.
  \item {\bf Conflicts of Interest:} The authors declare no conflict of interest.
\end{itemize}

\bibliographystyle{unsrtnat}
\bibliography{references}  






\end{document}